\newtheorem{thm}{Theorem}[section]
\newtheorem{cor}[thm]{Corollary}
\newtheorem{lem}[thm]{Lemma}
\theoremstyle{definition}
\newtheorem{ex}[thm]{Exercise}
\newcommand{\R}{R}
\newcommand{\1}{\bar 1}
\newcommand{\inv}{^{\langle -1\rangle}}
\newcommand\cf[1]{\mathop{[#1]}}
\newcommand\dpow[2]{\frac{#1^{#2}}{#2!}}
\newcommand\sumz[1]{\sum_{#1=0}^\infty}
\newcommand{\p}{(p-1)}
\newcommand{\psup}[1]{^{(#1)}}
\DeclareMathOperator{\res}{res}
\numberwithin{equation}{subsection}
\numberwithin{thm}{subsection}
\begin{document}
\title{Lagrange Inversion}
\author{Ira M. Gessel$^*$}
\address{Department of Mathematics\\
   Brandeis University\\
   Waltham, MA 02453-2700}
\email{gessel@brandeis.edu}
\begin{abstract}
We give a survey of the Lagrange inversion formula, including different versions and proofs, with applications to combinatorial and formal power series identities.

\end{abstract}

\maketitle

\section{Introduction}
The Lagrange inversion formula is one of the fundamental formulas of combinatorics.  In its simplest form it gives a formula for the power series coefficients of the solution $f(x)$ of the function equation $f(x) = xG(f(x))$ in terms of coefficients of powers of $G$. Functional equations of this form often arise in combinatorics, and our interest is in these applications rather than in other areas of mathematics.

There are many generalizations of Lagrange inversion:  multivariable forms \cite{MR894817}, $q$-analogues \cite{MR638947, MR552269, MR690047, MR938977} noncommutative versions \cite{MR2200854,MR3081645,MR552269,MR2380843} and others
\cite{MR1354969,MR924765,MR927766}.
In this paper we discuss only ordinary one-variable Lagrange inversion, but in greater detail than elsewhere in the literature.

In section \ref{s-formula} we give a thorough discussion of some of the many different forms of Lagrange inversion, prove that they are equivalent to each other, and work through some simple examples involving Catalan and ballot numbers. We address a number of subtle issues that are overlooked in most accounts of Lagrange inversion (and which some readers may want to skip). 
In sections \ref{s-apps} 
we describe applications of Lagrange inversion to identities involving binomial coefficients, Catalan numbers, and their generalizations. In section \ref{s-proofs}, we give several proofs of Lagrange inversion, some of which are combinatorial.

A number of exercises giving  additional results are included.

An excellent introduction to Lagrange inversion can be found in Chapter 5 of Stanley's \emph{Enumerative Combinatorics}, Volume 2. Other expository accounts of Lagrange inversion can be found in Hofbauer \cite{hofbauer}, Bergeron, Labelle, and Leroux \cite[Chapter 3]{MR1629341}, Sokal \cite{MR2529395}, and 	Merlini, Sprugnoli, and Verri \cite{MR2290868}.																																																

\subsection{Formal power series}
Although Lagrange inversion is often presented as a theorem of analysis (see, e.g., Whittaker and Watson \cite[pp.~132--133]{ww}), we will work only with formal power series and formal Laurent series. A good account of  formal power series can be found in  Niven \cite{MR0252386}; we sketch here some of the basic facts. Given a coefficient ring $C$, which for us  will always be an integral domain containing the rational numbers, the ring $C[[x]]$ of \emph{formal power series}
in the variable $x$ with coefficients in $C$ is the set of all ``formal sums" $\sum_{n=0}^\infty c_n x^n$, where $c_n\in C$, with termwise addition and multiplication defined as one would expect using distributivity: $\sum_{n=0}^\infty a_n x^n \cdot \sum_{n=0}^\infty b_n x^n = \sum_{n=0}^\infty c_n x^n$, where $c_n = \sum_{i=0}^n a_i b_{n-i}$. Differentiation of formal power series is also defined termwise.  A series $\sum_{n=0}^\infty c_n x^n$ has a multiplicative inverse if and only if $c_0$ is invertible in $C$.  We may also consider the ring of \emph{formal Laurent series} $C((x))$ whose elements are formal sums $\sum_{n=n_0}^\infty c_n x^n$ for some integer $n_0$, i.e., formal sums $\sum_{n=-\infty}^\infty c_n x^n$ in which only finitely many negative powers of $x$ have nonzero coefficients.
Henceforth will omit the word ``formal" and speak of power series and Laurent series.

We can iterate the power series and Laurent series ring constructions, obtaining, for example the ring $C((x))[[y]]$ of  power series in $y$ whose coefficients are  Laurent series in $x$. 
In any (possibly iterated) power series or Laurent series ring we will say that a set $\{f_\alpha\}$ of series is \emph{summable} if for any monomial $m$ in the variables, the coefficient of $m$ is nonzero in only finitely many $f_\alpha$. In this case the sum $\sum_\alpha f$ is well-defined and we will say that $\sum_\alpha f_\alpha$ is summable.
If we write $\sum_\alpha f_\alpha$ as an iterated sum, then the order of summation is irrelevant. 
If $f(x)=\sum_n c_n x^n$ is a Laurent series in $C((x))$ and $u\in C$, where $C$ may be a power series or Laurent series ring, then we say that that the substitution of $u$ for $x$ is \emph{admissible} if $f(u)=\sum_n c_n u^n$ is summable, and similarly for multivariable substitutions. Admissible substitutions are homomorphisms. If $u$ is a power series or Laurent series $g(x)$ then $f(g(x))$, if summable, is called the \emph{composition} of $f$ and $g$. If $f(x) = c_1x+c_2x^2+\cdots$, where $c_1$ is invertible in $C$, then there is a unique power series $g(x)= c_1^{-1}x +\cdots$ such that $f(g(x)) = x$; this implies that $g(f(x))=x$. We call $g(x)$ the \emph{compositional inverse} of $f(x)$ and write $g(x)= f(x)\inv$. For simplicity, we will always assume that if $f(x) = c_1x+c_2x^2+\cdots$, where $c_1\ne 0$ then $c_1$ is invertible. (Since $C$ is an integral domain, we can always adjoin $c_1^{-1}$ to $C$ if necessary.)

The iterated power series rings $C[[x]][[y]]$ and $C[[y]][[x]]$ are essentially the same, in that both consist of all sums $\sum_{m,n\ge0}c_{m,n}x^m y^n$. We may therefore write $C[[x,y]]$ for either of these rings.  However, the iterated Laurent series rings $C((x))[[y]]$, $C((y))[[x]]$, and $C[[y]]((x))$  are all different: in the first we have
$(x-y)^{-1}=\sum_{n=0}^\infty y^n\!/x^{n+1}$, in the second we have $(x-y)^{-1}=-\sum_{n=0}^\infty x^n\!/y^{n+1}$, and in the third $x-y$ is not invertible. 

It is sometimes convenient   to work with power series in infinitely many variables; for example, we may consider the power series $\sum_{n=0}^\infty r_n t^n$ where the $r_n$ are independent indeterminates. Although we don't give a formal definition of these series, they behave, in our applications, exactly as expected.

We use the notation $\cf{x^n} f(x)$ to denote the coefficient of $x^n$ in the Laurent series $f(x)$. An important fact about the coefficient operator that we will use often, without comment, is that $\cf{x^n} x^k f(x) = \cf{x^{n-k}} f(x)$.

The binomial coefficient $\binom ak$ is defined to be $a(a-1)\cdots (a-k+1)/k!$ if $k$ is a nonnegative integer and 0 otherwise. Thus the binomial theorem $(1+x)^a = \sum_{k=0}^\infty \binom ak x^k$ holds for all $a$.

\section{The Lagrange inversion formula}
\label{s-formula}
\subsection{Forms of Lagrange inversion}
\label{ss-forms}
We will give several proofs of the Lagrange inversion formula in section \ref{s-proofs}. Here we state several different forms of Lagrange inversion and show that they are equivalent.
\begin{thm}
\label{t-1}
Let $\R(t)$ be a  power series not involving $x$. Then there is a unique  power series $f=f(x)$ such that 
$f(x) = x \R(f(x))$, and for any  Laurent series $\phi(t)$ and $\psi(t)$  not involving $x$ and any integer $n$ we have 

{\allowdisplaybreaks
\begin{gather}
\cf{x^n}\phi(f)=\frac{1}{n}\cf{t^{n-1}}\phi'(t)\R(t)^n,\ \text{where } n\ne0,\label{e-a}\\
\cf{x^n}\phi(f)=\cf{t^n}\bigl(1-t\R'(t)/\R(t)\bigr)\phi(t)\R(t)^n, \label{e-b}\\
\phi(f) = \sum_n x^n \cf{t^n} (1-x\R'(t))\phi(t)\R(t)^n, \label{e-c}\\
\cf{x^n}\frac{\psi(f)}{ 1-x\R'(f)}=\cf{t^n}{\psi(t)\R(t)^n}, \label{e-d}\\
\cf{x^n}\frac{\psi(f)}{ 1-f\R'(f)/\R(f)}=\cf{t^n}{\psi(t)\R(t)^n}. \label{e-e}
\end{gather}
}

\end{thm}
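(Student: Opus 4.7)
The plan is to first settle existence and uniqueness of $f$, and then to prove the five formulas are equivalent to one another, leaving the proof of any single one of them to Section~\ref{s-proofs}. For existence and uniqueness, I would write $f(x)=\sum_{n\ge1}a_n x^n$, substitute into $f=x\R(f)$, and observe that the coefficient of $x^n$ on the right involves only $a_1,\dots,a_{n-1}$ together with the coefficients of $\R$, so each $a_n$ is determined recursively and uniquely.

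For the equivalences I would take \eqref{e-b} as the hub and proceed along the chain \eqref{e-a}$\Leftrightarrow$\eqref{e-b}$\Leftrightarrow$\eqref{e-c}, \eqref{e-b}$\Leftrightarrow$\eqref{e-d}$\Leftrightarrow$\eqref{e-e}. The step \eqref{e-a}$\Leftrightarrow$\eqref{e-b} (for $n\ne0$) is formal integration by parts: applying the product rule to $\phi(t)\R(t)^n$ and using the identity $[t^{n-1}]g'(t)=n[t^n]g(t)$, one rewrites $\frac1n[t^{n-1}]\phi'(t)\R(t)^n$ as $[t^n]\bigl(1-t\R'(t)/\R(t)\bigr)\phi(t)\R(t)^n$. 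The step \eqref{e-b}$\Leftrightarrow$\eqref{e-c} is just multiplying \eqref{e-b} by $x^n$, summing over all integers $n$, and reindexing the cross term by the identity $[t^n]\,t\R'(t)/\R(t)\cdot\phi(t)\R(t)^n=[t^{n-1}]\R'(t)\phi(t)\R(t)^{n-1}$. The step \eqref{e-d}$\Leftrightarrow$\eqref{e-e} is immediate from $x=f/\R(f)$, which gives $x\R'(f)=f\R'(f)/\R(f)$.

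The substantive step is \eqref{e-b}$\Leftrightarrow$\eqref{e-d}. For \eqref{e-b}$\Rightarrow$\eqref{e-d} I would expand the geometric series $1/(1-x\R'(f))=\sum_{k\ge0}x^k\R'(f)^k$, apply \eqref{e-b} to each $\phi(t)=\psi(t)\R'(t)^k$, and resum: the inner series $\sum_{k\ge0}\bigl(t\R'(t)/\R(t)\bigr)^k$ equals $1/\bigl(1-t\R'(t)/\R(t)\bigr)$ since $t\R'(t)/\R(t)$ vanishes at $t=0$, cancelling the prefactor from \eqref{e-b} and leaving $[t^n]\psi(t)\R(t)^n$. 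Conversely, substituting $\psi(t)=\phi(t)\bigl(1-t\R'(t)/\R(t)\bigr)$ into \eqref{e-d} and using \eqref{e-e} to rewrite the denominator recovers \eqref{e-b}.

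The main technical obstacle is bookkeeping around formal Laurent series: one must ensure $\R(t)$ is invertible so that $t\R'(t)/\R(t)$ is a well-defined power series with zero constant term (making the geometric-series cancellation in Step~3 legitimate) and that the compositions $\phi(f),\,\R'(f)$ are admissible in the sense of the preceding subsection. A smaller subtlety is that the $n=0$ case of \eqref{e-b}--\eqref{e-c} falls outside the range of the \eqref{e-a}$\Leftrightarrow$\eqref{e-b} argument because of the $1/n$ factor in \eqref{e-a}; it is recovered by the \eqref{e-b}$\Leftrightarrow$\eqref{e-d} equivalence, which holds for all integers $n$.
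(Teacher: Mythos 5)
Your proposal is correct, and its skeleton---establish $f$ by a coefficient recursion, then prove the five formulas pairwise equivalent while deferring the proof of any one of them to Section~\ref{s-proofs}---is exactly the structure the paper uses. The individual links differ in routing, though. Your \eqref{e-a}$\Leftrightarrow$\eqref{e-b} step via $[t^{n-1}]g'(t)=n[t^n]g(t)$ is the same residue-of-a-derivative computation the paper performs (written there in terms of $g(t)=t/\R(t)$), and your direct \eqref{e-b}$\Leftrightarrow$\eqref{e-c} reindexing is a clean shortcut where the paper instead passes through a version of \eqref{e-d} in which $\psi$ is allowed to depend on $x$. The real divergence is at \eqref{e-b}$\Leftrightarrow$\eqref{e-d}: you expand $1/(1-x\R'(f))$ as a geometric series, apply \eqref{e-b} termwise to $\psi(t)\R'(t)^k$ at index $n-k$, and resum, whereas the paper gets the same link in one line by observing that \eqref{e-d}$\Leftrightarrow$\eqref{e-e} (since $x=f/\R(f)$) and that \eqref{e-e} with $\psi(t)=\bigl(1-t\R'(t)/\R(t)\bigr)\phi(t)$ \emph{is} \eqref{e-b}, because $\phi\mapsto\psi$ is a bijection on Laurent series ($1-t\R'/\R$ having constant term $1$). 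Your resummation is valid---the summability points you raise ($x\R'(f)$ divisible by $x$, $t\R'(t)/\R(t)$ with zero constant term) are exactly the right ones---but it is heavier machinery than needed. One small housekeeping item you should make explicit, as the paper does: when $\R(0)=0$ one has $f=0$ and the expressions $\R(t)^n$ for $n<0$ and $t\R'/\R$ are not defined, so the equivalence arguments should be prefaced by disposing of that case and assuming $\R(0)$ invertible thereafter.
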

We show here these formulas are equivalent in the sense that any one of them is easily derivable from any other;  proofs of these formulas are given in section \ref{s-proofs}. It is clear that \eqref{e-d} and \eqref{e-e} are equivalent since $x=f/R(f)$. Taking $\psi(t)=(1-t\R'(t)/R(t))\phi(t)$ shows that \eqref{e-b} and \eqref{e-e} are equivalent.

To derive \eqref{e-c} from \eqref{e-d}, we rewrite \eqref{e-d} as
\begin{equation}
\label{e-dx}
\frac{\psi(f)}{ 1-x\R'(f)}=\sum_n x^n \cf{t^n}{\psi(t)\R(t)^n}.
\end{equation}
Until now, we have  assumed that  $\phi(t)$ and $\psi(t)$ do not involve $x$. 
We leave it to the reader to see that in \eqref{e-dx} this assumption can be removed. Then \eqref{e-c} follows from 
\eqref{e-dx} by setting $\psi(t) = (1-x\R'(t))\phi(t)$, and similarly \eqref{e-dx} follows from \eqref{e-c}.

Although we allow $R$ to be an arbitrary  power series in Theorem \ref{t-1}, if $R$ has constant term 0 then $f(x)=0$, so we may assume now that $R$ has a nonzero constant term, and thus $f$ and  $R(f)$ are nonzero. Then the equation $f(x) = xR(f(x))$ may be rewritten as $f/R(f) = x$. So if we set $g(t) = t/R(t)$ then we have $g(f) = x$, and thus $g = f\inv$. It is sometimes convenient to rewrite the formulas of Theorem \ref{t-1} using $g$, rather than $R$. Since $1-tR'(t)/R(t) = t g'(t)/g(t)$,  formula  \eqref{e-b}  takes on a slightly simpler form (which will be useful later on) when expressed in terms of $g$ rather than $R$:
\begin{equation}
\label{e-bg}
\cf{x^n}\phi(f)=\cf{t^{n-1}}\phi(t) \frac{g'(t)}{g(t)}\left(\frac{t}{g(t)}\right)^n
 = \cf{t^{-1}}\frac{\phi(t) g'(t)}{g(t)^{n+1}},\end{equation}
For future use, we note also the corresponding form for  \eqref{e-e}:
\begin{equation}
\label{e-eg}
\cf{x^{n-1}}\frac{\psi(f)}{fg'(f)}=\cf{t^n}\psi(t)\left(\frac{t}{g(t)}\right)^n
=\cf{t^0}\frac{\psi(t)}{g(t)^n}.
\end{equation}

To show that \eqref{e-a} and \eqref{e-b} are equivalent, using \eqref{e-bg} in place of \eqref{e-b}, we show that 
\begin{equation}
\label{e-res1}
\cf{t^{-1}} \frac{\phi'(t)}{g(t)^n} = n\cf{t^{-1}}\frac{\phi(t)g'(t)}{g(t)^{n+1}}
\end{equation}
But 
\begin{equation*}
\frac{\phi'(t)}{g(t)^n} - n\frac{\phi(t)g'(t)}{g(t)^{n+1}} = \frac{d\ }{dt} \frac{\phi(t)}{g(t)^n},
\end{equation*}
and the coefficient of $t^{-1}$ in the derivative of any  Laurent series is 0, so \eqref{e-res1} follows.
This shows that \eqref{e-a} and \eqref{e-b} are equivalent if $n\ne 0$. If $\phi$ is a power series, then the coefficient of $x^0$ in $\phi(f)$ is simply the constant term in $\phi$, but if $\phi$ is a more general Laurent series, then the constant term in $\phi(f)$ is not so obvious, and cannot be determined by \eqref{e-a}. In equation \eqref{e-a+} we will give a formula for the constant term in $\phi(f)$ for all $\phi$.

The case $\phi(t) = t^k$ of \eqref{e-a}, with $R(t)=t/g(t)$ may be written
\begin{equation}
\label{e-sj1}
\cf{x^{n}} f^k = \frac{k}{n}\cf{t^{n-k}}\left(\frac{t}{g(t)}\right)^n
  =\frac{k}{n}\cf{t^{-k}}g(t)^{-n}.
\end{equation}
In other words, if $g=f\inv$, and for all integers $k$,
$f^k = \sum_n a_{n,k}x^n$ and $g^k=\sum_n b_{n,k}x^n$ then 
\begin{equation}
\label{e-sj2}
a_{n,k}=\frac kn b_{-k, -n}
\end{equation}
for $n\ne 0$.
Equation \eqref{e-sj2} is known as the \emph{Schur-Jabotinsky theorem}. (See Schur \cite[equation (10)]{MR0011740} and Jabotinsky \cite[Theorem II]{MR0059359}.)

\subsection{Polynomials}
\label{s-polynomials}
We now give a slightly more general form of Lagrange inversion based on the fact that if two polynomials agree at infinitely many values than they are identically equal. This will imply that to prove our Lagrange formulas for all $n$, it is sufficient to prove them in the case in which $n$ is a positive integer. (Some proofs require this restriction.)

By linearity, the formulas of Section \ref{ss-forms} are implied by the special cases in which $\phi(t)$ and $\psi(t)$ are of the form $t^k$ for some integer $k$, and these special cases (especially $k=0$ and $k=1$) are particularly important. These special cases of  \eqref{e-a}, \eqref{e-d}, and \eqref{e-e}  are especially useful and may be written
\begin{gather}
\cf{x^n} f^k = \frac{k}{n} \cf{t^{n-k}} R(t)^n, \ \text{where } n\ne0,\label{e-pa}\\
\cf{x^n}\frac{f^k}{1-xR'(f)}=\cf{t^{n-k}} R(t)^n. \label{e-pd}\\
\cf{x^n}\frac{f^k}{ 1-f\R'(f)/\R(f)}=\cf{t^{n-k}}{\R(t)^n}. \label{e-pe}\end{gather}

In these formulas, let us assume that $R(t)$ has constant term 1. (It is not hard to modify our approach to take care of the more general situation in which the constant term of $R(t)$ is  invertible.) Then the coefficient of $x$ in $f(x)$ is 1, so $f(x)/x$ has constant term 1. If we set $n=m+k$ in \eqref{e-pa}, \eqref{e-pd}, and \eqref{e-pe} then the results may be written 
\begin{gather}
\cf{x^m} (f/x)^k = \frac{k}{m+k} \cf{t^{m}} R(t)^{m+k}, \ \text{where } m+k\ne0,\label{e-qa}\\
\cf{x^m}\frac{(f/x)^k}{1-xR'(f)}=\cf{t^{m}} R(t)^{m+k} \label{e-qd}\\
\cf{x^m}\frac{(f/x)^k}{1-fR'(f)/R(f)}=\cf{t^{m}} R(t)^{m+k}. \label{e-qe}
\end{gather}
It is easy to see that in each of these equations, for fixed $m$ both sides are polynomials in $k$. Thus if these equalities hold whenever $k$ is a positive integer, then they hold as identities of polynomials in $k$.  Moreover, although \eqref{e-qa} is invalid for $k=-m$,  if $m>0$ we may take the limit as $k\to -m$ with l'H\^opital's rule to obtain 
\begin{equation}
\label{e-log1}
\cf{x^m} (f/x)^{-m}=\cf{x^0} f^{-m}= - m\cf{t^m}\log R. 
\end{equation}
(Note that \eqref{e-log1} does not hold for $m=0$.) By linearity, \eqref{e-log1} yields a supplement to \eqref{e-a} that takes care of the case $n=0$:
\begin{equation}
\label{e-a+}
\cf{x^0} \phi(f)=\cf{t^0} \phi(t) + \cf{t^{-1}} \phi'(t) \log R.
\end{equation}

We can also differentiate \eqref{e-qa} with respect to $k$ and then set $k=0$ to obtain
\begin{equation}
\label{e-log2}
[x^m] \log (f/x) = \frac{1}{m} \cf{t^m} R(t)^m, \ \text{for $m\ne 0$}.
\end{equation}

Returning to \eqref{e-pa}--\eqref{e-pe}, we see that if they hold when $n$ and $k$ are positive integers, then they also hold when $n$ and $k$ are arbitrary integers. (Note that if $n<k$ then everything is zero.)

\subsection{A simple example: Catalan numbers}
The Catalan numbers $C_n$ may defined by the equation 
\begin{equation}
c(x) = 1+ xc(x)^2 \label{e-cat}
\end{equation}
for their generating function $c(x) = \sum_{n=0}^\infty C_n x^n.$
The quadratic equation \eqref{e-cat} has two solutions, $\bigl(1\pm\sqrt{1-4x}\bigr)/(2x)$, but only the minus sign gives a power series, so 
\begin{equation*}
c(x) = \frac{1-\sqrt{1-4x}}{2x}.
\end{equation*}

Unfortunately \eqref{e-cat} is not of the form $f(x) = xR(f(x))$, so we cannot apply directly any of the versions of Lagrange inversion that we have seen so far.

One way to apply Lagrange inversion is to set $f(x) = c(x) -1$, so that $f=x(1+f)^2$.
We may then apply Theorem \ref{t-1} to the case $R(t) = (1+t)^2$. The equation $f=x(1+f)^2$ has the solution 
\begin{equation*}
f(x) = c(x) -1 =xc(x)^2= \frac{1-\sqrt{1-4x}}{2x} -1.
\end{equation*}
Then \eqref{e-a} with $\phi(t) = (1+t)^k$ gives for $n>0$
\begin{align*}
\cf{x^n} c(x)^k &= \cf{x^n} (1+f)^k = \frac1n \cf{t^{n-1}} k(1+t)^{k-1}(1+t)^{2n}\\
  &=\frac kn \cf{t^{n-1}}\binom{2n+k-1}{n-1}.
\end{align*}
Thus since the constant term in $c(x)^k$ is 1, we have
\begin{equation*}
c(x)^k = 1+\sum_{n=1}^\infty \frac{k}{n}\binom{2n+k-1}{n-1}x^n.
\end{equation*}
The sum may also be written
\begin{equation}
\label{e-balalt}
c(x)^k=\sum_{n=0}^\infty\frac{k}{2n+k}\binom{2n+k}{n} x^n
  = \sum_{n=0}^\infty \frac{k}{n+k}\binom{2n+k-1}{n} x^n.
\end{equation}
These formulas are valid for all $k$  
except where  $n=-k/2$ in the first sum in \eqref{e-balalt} or $n=-k$ in the second sum in  \eqref{e-balalt}.
These coefficients are called \emph{ballot numbers}, and for $k=1$ \eqref{e-balalt} gives the usual formula for the Catalan numbers, $C_n = \frac{1}{n+1} \binom{2n}{n}$. 

Equation \eqref{e-b} with $\phi(t) = (1+t)^k$ gives a formula for the ballot number as a difference of two binomial coefficients,
\begin{align*}
\cf{x^n} c(x)^k &= \cf{t^n}\frac{1-t}{1+t} (1+t)^k (1+t)^{2n}\\
  &=\cf{t^n} (1-t) (1+t)^{2n+k-1}\\
  &=\binom{2n+k-1}{n} -\binom{2n+k-1}{n-1},
\end{align*}
and
equation \eqref{e-c} with $\phi(t) = (1+t)^k$ gives another such formula,
\begin{align*}
c(x)^k   &=\sum_n x^n \cf{t^n}\bigl((1+t)^{2n+k}-2x(1+t)^{2n+k+1}\bigr)\\
  &=\sum_n x^n \left[ \binom{2n+k}{n} -2 \binom{2n+k-1}{n-1}\right].
\end{align*}

Finally, \eqref{e-log2} gives
\begin{equation*}
\cf{x^m} \log (f/x) = \cf{x^m} 2\log c(x) = \frac 1m \cf{t^m}(1+t)^{2m}=\frac1m \binom{2m}{m},
\end{equation*}
so
\begin{equation*}
\log c(x) = \sum_{m=1}^\infty \frac{1}{2m}\binom{2m}{m} x^m.
\end{equation*}

Since $R(t)=(1+t)^2$, we have $R'(t) = 2(1+t)$, so $1-xR'(f) = 1-2xc(x) = \sqrt{1-4x}$. Thus \eqref{e-d}, with $\psi(t) = (1+t)^k$ gives
\begin{equation*}
\cf{x^n}\frac{c(x)^k}{\sqrt{1-4x}}=\cf{t^n}(1+t)^k(1+t)^{2n}=\binom{2n+k}{k},
\end{equation*}
so
\begin{equation}
\label{e-midbin}
\frac{c(x)^k}{\sqrt{1-4x}}=\sumz n \binom{2n+k}{k} x^n.
\end{equation}

Equating coefficients of $x^n$ in $c(x)^k c(x)^l=c(x)^{k+l}$, and using \eqref{e-balalt}, gives the convolution identity
\begin{equation*}
\sum_{i+j=n} \frac{k}{2i+k}\binom{2i+k}{i}\cdot\frac{l}{2j+l}\binom{2j+l}{j}=\frac{k+l}{2n+k+l}\binom{2n+k+l}{n}.
\end{equation*}

Similarly using \eqref{e-balalt} and \eqref{e-midbin} we get
\begin{equation*}
\sum_{i+j=n} \frac{k}{2i+k}\binom{2i+k}{i}\binom{2j+l}{j}=\binom{2n+k+l}{n}.\end{equation*}
These convolution identities are special cases of identities discussed in section \ref{ss-fc}.

\begin{ex} Derive these formulas for $c(x)$ in other ways by applying Lagrange inversion to the equations  $f=x/(1-f)$ and $f=x(1+f^2)$.
\end{ex}

\subsection{A generalization}
There is another way to apply Lagrange inversion to the equation $c(x) = 1+x c(x)^2$ that, while very simple, has far-reaching consequences. Consider the equation
\begin{equation}
\label{e-cat1}
F = z(1+xF^2)
\end{equation}
where we think of $F$ as a power series in $z$ with coefficients that are polynomials in $x$.
We may apply \eqref{e-pa} to \eqref{e-cat1} to get
\begin{equation*}
\cf{z^n} F^k = \frac{k}{n}\cf{t^{n-k}}(1+xt^2)^n.
\end{equation*}
The right side is 0 unless $n-k$ is even, and for $n=2m+k$ we have
\begin{equation*}
\cf{z^{2m+k}} F^k = \frac{k}{2m+k}\cf{t^{2m}}(1+xt^2)^{2m+k}
  =\frac{k}{2m+k} \binom{2m+k}{m}x^m.
\end{equation*}
Thus we have (if $k$ is an integer but not a negative even integer)
\begin{equation}
\label{e-F1}
F^k = \sum_{m=0}^\infty \frac{k}{2m+k}\binom{2m+k}{m}x^m z^{2m+k}.
\end{equation}
Now let $c$ be the result of setting $z=1$ in $F$ (an admissible substitution), so by \eqref{e-F1}, we have
\begin{equation*}
c^k = \sum_{m=0}^\infty \frac{k}{2m+k}\binom{2m+k}{m}x^m 
\end{equation*}
and by \eqref{e-cat1} we have $c=1+xc^2$. Moreover, as we have seen before, $c=1+xc^2$ has a unique  power series solution.

The same idea works much more generally, but we must take care that the substitution is admissible. For example, we can solve $f=x(1+f)$ by Lagrange inversion, but we cannot set $x=1$ in the solution.

The case in which the coefficients of $R(t)$ are indeterminates is easy to deal with. 
\begin{thm} 
\label{t-indets}
Suppose that $R(t) = \sum_{n=0}^\infty r_n t^n$, where the $r_n$ are indeterminates. Then there is a unique power series $f$ satisfying $f=R(f)$. 
If $\phi(t)$ is a power series then 
\begin{equation}
\label{e-r00}
\phi(f) = \phi(0)+ \sum_{n=1}^\infty \frac{1}{n}\cf{t^{n-1}}\phi'(t) R(t)^n,
\end{equation}
and for any  Laurent series $\phi(t)$ and $\psi(t)$ we have
\begin{gather}
\phi(f) = \cf{t^0}\phi(t) +\cf{t^{-1}} \phi'(t) \log (R/r_0) + \sum_{n\ne 0} \frac{1}{n}\cf{t^{n-1}}\phi'(t) R(t)^n,\label{e-r0}\\
\phi(f) = \sum_n \cf{t^n}\bigl(1-t\R'(t)/\R(t)\bigr)\phi(t)\R(t)^n
    = \sum_n \cf{t^n} (1-\R'(t))\phi(t)\R(t)^n,\label{e-r1}\\
\frac{\psi(f)}{ 1-\R'(f)}
=\frac{\psi(f)}{ 1-f\R'(f)/\R(f)}
=\sum_n\cf{t^n}{\psi(t)\R(t)^n}.\label{e-r2}
\end{gather}
In \eqref{e-r1} and \eqref{e-r2} the sum is over all integers $n$.
\end{thm}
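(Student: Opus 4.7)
My plan is to reduce Theorem \ref{t-indets} to Theorem \ref{t-1} by introducing an auxiliary variable $x$, applying the existing versions of Lagrange inversion to the equation $F = xR(F)$, and specializing $x = 1$ at the end. The entire proof hinges on justifying that this specialization is admissible, and for this I would use a single homogeneity argument on the indeterminates $r_i$.

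Working over $C[r_0,r_1,\ldots\,][[x]]$, or its localization at $r_0$ whenever Laurent $\phi$ or $\psi$ requires it, let $F$ be the unique solution of $F = xR(F)$ from Theorem \ref{t-1}. Assign each $r_i$ the weight $1$, $r_0^{-1}$ the weight $-1$, and $x$ and $t$ the weight $0$. Formula \eqref{e-pa} then shows that $\cf{x^n} F^k = (k/n)\cf{t^{n-k}} R(t)^n$ is a homogeneous polynomial of weight $n$ in the $r_i^{\pm1}$ for every integer $k$ and every $n\ne 0$; by multiplicativity and linearity, so is $\cf{x^n}\phi(F)$, and so is each $x$-coefficient of the Laurent expressions appearing on the left of \eqref{e-c}--\eqref{e-d}. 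Consequently any fixed monomial in the $r_i^{\pm1}$ appears in at most one (and occasionally two) of the $x$-coefficients, so summability at $x = 1$ holds. Since admissible substitutions are homomorphisms, $f := F|_{x=1}$ satisfies $f = R(f)$, and uniqueness transfers from $F$.

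Each of \eqref{e-r00}--\eqref{e-r2} is then obtained by summing the corresponding identity from Theorem \ref{t-1} over $n$ and setting $x = 1$. For \eqref{e-r00} I would use \eqref{e-a} at each $n \ge 1$ together with the observation that $\cf{x^0}\phi(F) = \phi(0)$ when $\phi$ is a power series, since $F$ has zero constant term in $x$. For \eqref{e-r0} I would invoke \eqref{e-a+} at $n = 0$: even though $\log R$ formally involves the ill-defined symbol $\log r_0$, the operator $\cf{t^{-1}}\phi'(t)$ annihilates any constant (because $\phi'(t)$ never has a $t^{-1}$ term), so $\log R$ may safely be replaced by the honest power series $\log(R/r_0)$. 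Equation \eqref{e-r1} comes from summing \eqref{e-b} over all $n$ (first equality) and from specializing \eqref{e-c} at $x = 1$ (second equality). Finally, \eqref{e-r2} follows from \eqref{e-dx} at $x = 1$, the two denominators in \eqref{e-r2} coinciding because $f = R(f)$ forces $fR'(f)/R(f) = R'(f)$.

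The main obstacle is bookkeeping rather than conceptual: I need to verify that the weight grading extends cleanly when $\phi$ or $\psi$ is Laurent (so that $r_0^{-1}$ enters) and that the logarithm subtlety for \eqref{e-r0} is handled correctly as above. Once admissibility of $x=1$ is in place, every assertion of the theorem is a mechanical specialization of a formula already proved in Theorem \ref{t-1}.
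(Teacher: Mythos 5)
Your overall strategy---introduce the auxiliary equation $F=xR(F)$, apply Theorem \ref{t-1}, and then make the admissible substitution $x=1$---is exactly the paper's proof, and your weight-grading argument (each $r_i$ of weight $1$, $r_0^{-1}$ of weight $-1$, so that $\cf{x^n}\phi(F)$ is homogeneous of weight $n$ and each monomial survives in only finitely many terms) is a correct and welcome fleshing-out of the word ``admissible,'' which the paper leaves implicit. Your handling of the $\log R$ correction in \eqref{e-r0} also matches what the paper intends when it says the constant term $r_0$ must be accounted for.

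There is, however, one genuine gap: the sentence ``uniqueness transfers from $F$.'' Uniqueness of the solution of $F=xR(F)$ does not by itself imply uniqueness of the solution of $f=R(f)$, because a power series solution of $f=R(f)$ need not arise as the specialization at $x=1$ of a solution of $F=xR(F)$. The paper makes precisely this point immediately after the theorem: $f=x+yf^2$ has a unique solution, the substitution $y=1$ is admissible, and yet $f=x+f^2$ has two power series solutions, so uniqueness is in general \emph{not} preserved by admissible substitutions. In the present situation uniqueness does hold, but it needs its own argument; the paper supplies it by equating coefficients of each monomial $r_0^{i_0}r_1^{i_1}\cdots$ on both sides of $f=R(f)$, which yields a recurrence (equivalently, an induction on your weight grading: the weight-$d$ part of $f$ is determined by the parts of weight less than $d$, starting from the weight-$1$ part $r_0$). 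You should replace the transfer claim with this direct recurrence, after which the rest of your proof stands.
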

\begin{proof}
These formulas follow from equations \eqref{e-a} to \eqref{e-e} on making the admissible substitution $x=1$, where for \eqref{e-r0} we have included the correction term given by \eqref{e-a+}, modified to take into account that the constant term of $R(t)$ is $r_0$ rather than 1. Uniqueness follows by equating coefficients of the monomials $r_0^{i_0} r_1^{i_1}\cdots$ on both sides of $f=R(f)$, which gives a recurrence that determines them uniquely.\end{proof}

We would like to relax the requirement  in Theorem \ref{t-indets} that the $r_n$ be indeterminates. To do this, we can take any of the formulas of Theorem \ref{t-indets} and apply any admissible substitution for the $r_n$.
For example, the following result, while not the most general possible, is sometimes useful. 
\begin{thm}
\label{t-genli}
Suppose that $R(t) = \sumz n r_n t^n$, where the coefficients lie in some power series ring $C[[u_1, u_2,\dots]]$, and that each $r_n$ with $n>0$ is divisible by some $u_i$. Then there is a unique power series $f$ satisfying $f=R(f)$, and formulas \eqref{e-r00} to \eqref{e-r2} hold.\qed
\end{thm}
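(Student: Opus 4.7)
The plan is to deduce Theorem~\ref{t-genli} from Theorem~\ref{t-indets} by specialization. In Theorem~\ref{t-indets} the coefficients $r_0,r_1,\dots$ are indeterminates, so each of \eqref{e-r00}--\eqref{e-r2} is a formal identity in the ring of power series in $r_0,r_1,\dots$ over $C$. Substituting the given $\bar r_n\in C[[u_1,u_2,\dots]]$ for the symbols $r_n$ defines a ring homomorphism whenever the substitution is admissible, and admissible substitutions commute with summable sums, so the identities transfer verbatim. The work therefore splits into (i) existence and uniqueness of $f$, and (ii) admissibility of the substitution in each formula.

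For (i), let $I=(u_1,u_2,\dots)$ denote the maximal ideal of $C[[u_1,u_2,\dots]]$. Set $f_0=r_0$ and $f_{k+1}=R(f_k)$. Since $r_n\in I$ for $n>0$, the telescoping
\begin{equation*}
R(g)-R(h)=(g-h)\sum_{n\ge 1}r_n\,(g^{n-1}+g^{n-2}h+\cdots+h^{n-1})
\end{equation*}
shows that $f_{k+1}-f_k$ lies in $I\cdot(f_k-f_{k-1})$, so the iteration is $I$-adically Cauchy; its limit is the unique power series solution of $f=R(f)$.

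For (ii), each term on the right-hand side of \eqref{e-r00}--\eqref{e-r2} is a $C$-multiple of a monomial $r_0^{a_0}r_1^{a_1}\cdots$ which, after substitution, lies in $I^{a_1+a_2+\cdots}$ because each $\bar r_i$ with $i>0$ belongs to $I$. Such a monomial arises as a contribution to $[t^{n-1}]\phi'(t)R(t)^n$ (or its analogue for the other formulas) subject to $\sum_i a_i=n$ and $\sum_i i\,a_i$ equal to a fixed shift of $n$ coming from the degree of $\phi'$. The goal is to conclude that for any fixed monomial $M$ in the $u_i$'s only finitely many indices $n$ contribute to $M$, which is exactly summability; once this is established, each identity carries over through the substitution homomorphism.

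The main obstacle is this admissibility check, since the hypothesis ``divisible by some $u_i$'' only guarantees $I$-adic order at least one for each $\bar r_n$ with $n>0$, not growth of that order with $n$. I would isolate a single combinatorial lemma bounding, from the constraints $\sum a_i=n$ and $\sum i\,a_i=n-1-\deg_t\phi'$, the number of monomials of prescribed $I$-adic degree that can appear in $[t^{n-1}]\phi'(t)R(t)^n$ as $n$ varies, and then invoke that lemma uniformly for \eqref{e-r00}--\eqref{e-r2}. With step (ii) in place, the theorem reduces immediately to Theorem~\ref{t-indets}.
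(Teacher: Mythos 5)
Your overall strategy --- specialize Theorem \ref{t-indets} by substituting the given coefficients for the indeterminates $r_n$ --- is exactly what the paper intends: Theorem \ref{t-genli} is stated with no proof beyond the preceding remark that any admissible substitution transfers the identities. But that means essentially the entire content of the theorem is your step (ii), the admissibility check, and that is precisely the step you leave open (``I would isolate a single combinatorial lemma\dots''). As written, the proposal is a plan rather than a proof, and the plan meets a real obstruction. Under the constraints $\sum_i a_i=n$ and $\sum_{i\ge1} i\,a_i=n-k$, the monomial $r_0^{\,n-1}r_{n-k}$ occurs (with coefficient $k$, by \eqref{e-lagr2}) for \emph{every} $n>k$, and after substitution its $I$-adic order is only $1$ whenever $\bar r_0$ has nonzero constant term. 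So no lemma of the kind you describe can be extracted from the stated hypothesis alone: if infinitely many $\bar r_n$ are divisible by the \emph{same} $u_i$ and are not otherwise small, summability genuinely fails. Concretely, for $R(t)=1+u_1t/(1-t)$ every $r_n$ with $n>0$ is divisible by $u_1$, yet any solution of $f=R(f)$ would have constant term $1$, making $\sum_{n\ge1}u_1f^n$ non-summable (its coefficient of $u_1$ would be an infinite sum of $1$'s); equivalently, the quadratic $f^2-(2+u_1)f+1=0$ has discriminant $u_1^2+4u_1$, which has no power series square root. You correctly sensed this difficulty, but the resolution is not a cleverer counting lemma: one must either use additional structure (for instance, only finitely many $r_n$ nonzero, which forces $\sum_{i\ge1}a_i\ge (n-k)/N$ so the $I$-adic order grows with $n$; or each $u_i$ dividing only finitely many $\bar r_n$) or restrict the claim accordingly.

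Two smaller points. The same summability issue infects your step (i): the factor $\sum_{n\ge1}r_n(g^{n-1}+\cdots+h^{n-1})$ in the telescoping identity must itself be shown to be a well-defined element of $I$ before the contraction argument can begin, and this again requires more than ``each $r_n$ lies in $I$.'' Also, in $C[[u_1,u_2,\dots]]$ with infinitely many variables you want $I$ to be the ideal of series with zero constant term rather than the ideal generated by the $u_i$ (an infinite sum such as $u_1+u_2+\cdots$ lies in the former but not the latter), and it is maximal only when $C$ is a field; neither affects the argument, but the wording should be fixed.
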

We note that more generally, any admissible substitution will yield a solution of $f=R(f)$ to which these formulas hold, but uniqueness is not guaranteed. For example, the equation $f=x+yf^2$ has the unique  power series solution,
\begin{equation*}
f = \frac{1-\sqrt{1-4xy}}{2y}=\sum_{n=0}^\infty \frac{1}{n+1}\binom{2n}{n} x^{n+1}y^n.
\end{equation*}
The admissible substitution $y=1$ gives  $f=\frac12(1-\sqrt{1-4x})$ as a  power series solution of the equation $f=x+f^2$. However, the equation $f=x+f^2$ has another  power series solution, 
\begin{math}
f=\frac12(1+\sqrt{1-4x}).
\end{math}

\begin{ex}
The equation $f=x+f^2$ has two  power series solutions, $f=\frac12(1\pm\sqrt{1-4x})$. However, according to Theorem \ref{t-1}, the equivalent equation $f=x/(1-f)$ has only one  power series solution. Explain the discrepancy.
\end{ex}

\subsection{Explicit formulas for the coefficients}
It is sometimes useful to have an explicit formula for the coefficients of $f^k$ where $f = xR(f)$. With $R(t)=\sum_{n=0}^\infty r_n t^n$, if we expand $R(t)^n$ by the multinomial theorem then \eqref{e-pa} gives
\begin{equation}
\label{e-lagr2}
f^k= \sum_{\substack{n_0+n_1+\cdots = n\\
n_1+2n_2+3n_3+\cdots=n-k}}
k\frac{(n-1)!}{n_0!\,n_1!\dots}r_0^{n_0}r_1^{n_1}\cdots x^n.
\end{equation}

We might also want to express the coefficients of $f^k$ in terms of the coefficients of $g=f\inv$.
Suppose that $g(x) = x - g_2 x^2 - g_3 x^3 -\cdots$, where the minus signs and the assumption that the coefficient of $x$ in $g$ is 1 make our formula simpler with no real loss of generality.  Then \eqref{e-pa} gives
\begin{equation*}
[x^m] f^k = \frac km [t^{m-k}] \left(\frac{1}{1-g_2t-g_3t^2-\cdots}\right)^m.
\end{equation*}
Expanding  by the binomial theorem and simplifying gives 
\begin{equation*}
f^k = \sum_{\substack{n_2+n_3+\cdots=n-m\\
n_2+2n_3+\cdots=m-k
}}
k\frac{(n-1)!}{m!\, n_2!\,n_3!\cdots}g_2^{n_2}g_3^{n_3}\cdots x^m
\end{equation*}
where the sum is over all $m$, $n$, and $n_2,n_3,\dots$ satisfying the two subscripted equalities. If we replace $m$ with $n_0$ then we may write the formula as 
\begin{equation}
\label{e-lagr5}
f^k = \sum_{\substack{n_0+n_2+n_3+\cdots=n\\
2n_2+3n_3+\cdots=n-k
}}
k\frac{(n-1)!}{n_0!\, n_2!\,n_3!\cdots}x^{n_0}g_2^{n_2}g_3^{n_3}\cdots 
\end{equation}
and we see that the coefficients here are exactly the same as the coefficients in \eqref{e-lagr2} (with $n_1=0$). 

\begin{ex} 
Explain the connection between \eqref{e-lagr2} and \eqref{e-lagr5} without using Lagrange inversion.
\end{ex}

\subsection{Derivative formulas}

Lagrange inversion, especially in its analytic formulations, is often stated in terms of derivatives. We give here several derivative forms of Lagrange inversion.
\begin{thm}
\label{t-der}
Let $G(t) = \sum_{n=0}^\infty g_n t^n$, where the $g_i$ are indeterminates. Then there is a unique power series  $f$ satisfying 
\begin{equation*}
f = x + G(f)
\end{equation*}
and for any   power series $\phi(t)$ and $\psi(t)$ we have

\begin{equation}
\phi(f)=\sum_{m=0}^\infty \frac{d^m\ }{dx^m}
  \left(\phi(x)\bigl(1-G'(x)\bigr)\frac{G(x)^m}{ m!}\right), \label{e-d1}
\end{equation}
\begin{equation}
\phi(f) = \phi(x) +\sum_{m=1}^\infty  \frac{d^{m-1}\ }{dx^{m-1}}
 \left( \phi'(x) \frac{G(x)^m}{ m!}\right),\label{e-d2}
\end{equation}
and
\begin{equation}
\frac{\psi(f)}{ 1-G'(f)}=\sum_{m=0}^\infty \frac{d^m\ }{dx^m}\left(\psi(x)\frac{G(x)^m}{ m!}\right). \label{e-d3}
\end{equation}
\end{thm}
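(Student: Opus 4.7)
The plan is to introduce an auxiliary variable $\tau$ and reduce to the residue forms of Theorem \ref{t-1} already established. Consider the deformed equation $w = x + \tau G(w)$; writing $v = w - x$, this becomes $v = \tau R(v)$ with $R(s) := G(x+s)$. Since $R(s)$ does not involve $\tau$, this is the standard setup of Theorem \ref{t-1} with $\tau$ playing the role of $x$, so all the residue formulas apply with $v$ (resp.\ $w$) in place of $f$.

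To obtain \eqref{e-d1}, I apply \eqref{e-c} with $\phi_1(s) := \phi(x+s)$ and then set $\tau = 1$: each term on the right becomes $\cf{s^n}(1 - G'(x+s))\phi(x+s)G(x+s)^n$, which by the Taylor identity $\cf{s^n} F(x+s) = F^{(n)}(x)/n!$ equals $\frac{1}{n!}\frac{d^n}{dx^n}\bigl(\phi(x)(1-G'(x))G(x)^n\bigr)$---the $m=n$ summand of \eqref{e-d1}. For \eqref{e-d2} I apply \eqref{e-a} analogously: for $n \ge 1$ the same Taylor conversion turns $\frac{1}{n}\cf{s^{n-1}}\phi'(x+s)G(x+s)^n$ into $\frac{1}{n!}\frac{d^{n-1}}{dx^{n-1}}\bigl(\phi'(x)G(x)^n\bigr)$, and the $n=0$ term contributes $\phi(x)$ because $w|_{\tau=0} = x$. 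Formula \eqref{e-d3} is obtained in the same way from \eqref{e-d} with $\psi_1(s) := \psi(x+s)$, yielding $\cf{s^n}\psi(x+s)G(x+s)^n = \frac{1}{n!}\frac{d^n}{dx^n}\bigl(\psi(x)G(x)^n\bigr)$ after Taylor.

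Before invoking any of this, I would establish uniqueness of $f$ and admissibility of $\tau = 1$ using the grading that assigns weight $1$ to each $g_i$ and weight $0$ to $x$. Writing $f = \sum_{k \ge 0} f_k$ with $f_k$ the weight-$k$ piece, the equation $f = x + G(f)$ forces $f_0 = x$ and determines each $f_k$ for $k \ge 1$ from the strictly lower-weight pieces, since every monomial appearing in $G(f)$ has weight $\ge 1$; this gives existence and uniqueness of $f$ independently of Lagrange inversion. The same grading also shows admissibility of $\tau=1$: $G(x+s)^n$ has weight $\ge n$, so for any fixed monomial in $x$ and the $g_i$ only finitely many $n$ contribute to its coefficient, and $\tau = 1$ may be substituted termwise in each of the three formulas.

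The main technical obstacle is the Taylor identity $\cf{s^n}F(x+s) = F^{(n)}(x)/n!$ applied to $F$ that is itself a formal power series in $x$ with coefficients in the completion $C[[g_0, g_1, \dots]]$. This is routine in the bivariate formal-series framework set up in Section 1 of the paper, but it needs to be invoked cleanly (treating the $g_i$ as constants under $d/dx$); once it is in hand, all three derivative formulas drop out uniformly from the three corresponding residue formulas of Theorem \ref{t-1}.
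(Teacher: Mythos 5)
Your proof is correct, and it reaches the theorem by a route that is organized differently from the paper's, though both ultimately rest on Theorem \ref{t-1} together with Taylor's theorem for formal power series. The paper substitutes $R(t)=x+G(t)$ into the $f=R(f)$ form of Lagrange inversion (Theorem \ref{t-indets}), expands $(x+G(t))^n$ by the binomial theorem, and converts $\sum_n \cf{t^n}\binom nm x^{n-m}\alpha(t)$ into $\frac{1}{m!}\frac{d^m\ }{dx^m}\alpha(x)$ --- an identity that is exactly your Taylor identity $\cf{s^m}\alpha(x+s)=\alpha^{(m)}(x)/m!$ in disguise. It proves only \eqref{e-d3} this way, then obtains \eqref{e-d1} by the substitution $\psi=(1-G')\phi$ and \eqref{e-d2} from \eqref{e-d1} by a telescoping product-rule computation. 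Your recentering $v=w-x$ together with the deformation parameter $\tau$ replaces the binomial expansion (the powers of $\tau$ do the bookkeeping that the binomial coefficients $\binom nm$ do in the paper) and lets you derive all three formulas uniformly and independently from \eqref{e-c}, \eqref{e-a}, and \eqref{e-d}; in particular you avoid the telescoping step for \eqref{e-d2}, at the modest cost of having to verify that $G(x+s)$ is a legitimate power series in $s$ over the completed coefficient ring and that $\tau=1$ is admissible, both of which your weight grading handles. Your explicit grading argument for existence and uniqueness of $f$ is also a welcome addition: the paper's proof of Theorem \ref{t-der} is silent on uniqueness, which does not follow formally from Theorem \ref{t-indets} alone since $r_0=x+g_0$ is not an indeterminate there.
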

\begin{proof}
We first prove \eqref{e-d3}. By \eqref{e-r2} we have
\begin{equation*} 
\frac{\psi(f)}{ 1-G'(f)}=\sum_{n=0}^\infty \cf{t^n} \psi(t) (x+G(t))^n = \sum_{m=0}^\infty \sum_{n=0}^\infty\cf{t^n}  \binom nm x^{n-m}\psi(t)G(t)^m.
\end{equation*}

So to prove \eqref{e-d3}, it suffices to prove that 
\begin{equation*}
\frac{d^{m}\ }{dx^{m}}
\left( \psi(x) \frac{G(x)^m}{ m!}\right) = \sum_{n=0}^\infty\cf{t^n} \binom nm x^{n-m}\psi(t)G(t)^m.
\end{equation*}
We show more generally, that for any power series $\alpha(t)$ we have 
\begin{equation}
\label{e-diffa}
\frac{d^{m}\ }{dx^{m}}\frac{\alpha(x)}{m!}
 = \sum_{n=0}^\infty\cf{t^n} \binom nm x^{n-m}\alpha(t).
\end{equation}
But by linearity, it is sufficient to prove \eqref{e-diffa} for the case $\alpha(t) = t^j$, where both sides are equal to 
$\binom{j}{m} x^{j-m}$.

Next, \eqref{e-d1} follows from \eqref{e-d3} by taking $\psi(t) = (1-G'(t))\phi(t)$.

Finally, we derive \eqref{e-d2} from \eqref{e-d1}. Writing $D$ for $d/dx$, $\phi$ for $\phi(x)$ and $G$ for $G(x)$, we have 
\begin{equation*}
D^m \left(\phi \frac{ G^m}{m!}\right) = D^{m-1}\left(\frac{D (\phi G^m)}{m!}\right)
   = D^{m-1}\left(\phi'  \frac{G^m}{m!} + \phi G'\frac{G^{m-1}}{(m-1)!}\right).
\end{equation*}
Thus 
\begin{equation*}
D^{m-1}\left(\phi'  \frac{G^m}{m!}\right) = D^m \left(\phi \frac{ G^m}{m!}\right) -
  D^{m-1}\left( \phi G'\frac{G^{m-1}}{(m-1)!}\right),
\end{equation*}
so
\begin{equation*}
\phi+\sum_{m=1}^\infty D^{m-1}\left(\phi'  \frac{G^m}{m!}\right) 
  = \sum_{m=0}^\infty D^m \left(\phi \frac{ G^m}{m!}\right) -\sum_{m=0}^\infty  D^{m}\left( \phi G'\frac{G^{m}}{m!}\right).\qedhere
\end{equation*}

\end{proof}
As before, applying an admissible substitution allows more general coefficients to be used.

As an application of these formulas, let us take $G(x)=z H(x)$ and consider the formula
\begin{equation*}
\phi(f) \cdot \frac{\psi(f)}{1-zH'(f)} =  \frac{\phi(f)\psi(f)}{1-zH'(f)}.
\end{equation*}
Applying \eqref{e-d2} and \eqref{e-d3} to the left side and \eqref{e-d3} to the right, and then equating coefficients of $z^n$ gives the convolution identity
\begin{equation}
\label{e-cauchy1}
\sum_{m=0}^n \binom{n}{m} \frac{d^{m-1}\ }{dx^{m-1}}
 \left( \phi'(x)H(x)^m\right)
 \frac{d^{n-m}\ }{dx^{n-m}}\left(\psi(x)H(x)^{n-m}\right)
  = \frac{d^{n}\ }{dx^{n}}\bigl(\phi(x)\psi(x) H(x)^{n}\bigr)
\end{equation}
and similarly, expanding $\phi(f)\psi(f)$ in two ways using \eqref{e-d2} gives
\begin{equation}
\label{e-cauchy2}
\sum_{m=0}^n \binom{n}{m} \frac{d^{m-1}\ }{dx^{m-1}}
 \left( \phi'(x)H(x)^m\right)
 \frac{d^{n-m-1}\ }{dx^{n-m-1}}\left(\psi'(x)H(x)^{n-m}\right)
  = \frac{d^{n-1}\ }{dx^{n-1}}\bigl((\phi(x)\psi(x))' H(x)^{n}\bigr).
\end{equation}
Here $d^{m-1}\phi'(x)/dx^{m-1}$ for $m=0$ is to be interpreted as $\phi(x)$.
Formulas \eqref{e-cauchy1} and \eqref{e-cauchy2} were found by Cauchy \cite{cauchy1826}; a formula equivalent to \eqref{e-cauchy1} had been found earlier by 
Pfaff \cite{pfaff}. A detailed historical discussion of these identities and generalizations has been given by Johnson \cite{MR2281160}; see also Chu
\cite{MR2604484,MR3060871} and Abel \cite{MR3139570}. Our approach to these identities has also been given by Huang and Ma \cite{MR3365045}.
\begin{ex}
With the notation of Theorem \ref{t-der}, show that
for any positive integer $k$, 
\[G(f)^k=\sum_{m=0}^\infty \frac{k}{ (m+k)\,m!}\frac{d^m\ }{
dx^m} G(x)^{m+k}.\]
\end{ex}

\section{Applications}
\label{s-apps}
In this section we describe some applications of Lagrange inversion.

\subsection{A rational function expansion}
It is surprising that Lagrange inversion can give interesting results when the solution to the equation to be solved is rational. 

We consider the equation
\begin{equation*}
f=1+a+abf,
\end{equation*}
with solution
\begin{equation*}
f=\frac{1+a}{1-ab}.
\end{equation*}
We apply \eqref{e-r2} with $R(t) = 1+a+abt$ and  $\psi(t) = t^r (1+bt)^s$. Here we have
$1+bf = (1+b)/(1-ab)$ and $1-R'(f) = 1-ab$. Then
\begin{align*}
\frac{(1+a)^r(1+b)^s}{(1-ab)^{r+s+1}}
  &=\sum_n [t^n] t^r (1+bt)^s (1+a+abt)^n\\
  &=\sum_{n,i} [t^n]\binom{n}{i}a^i t^r(1+bt)^{s+i}\\
  &=\sum_{n,i,j} [t^n] \binom ni a^i \binom{s+i}j b^j t^{r+j}\\
  &=\sum_{i,j}\binom{r+j}i\binom{s+i}j a^i b^j.
\end{align*}
For another approach to this identity, see Gessel and Stanton \cite{MR773560}.

\subsection{The tree function}
\label{ss-tree}
In applying Lagrange inversion, the nicest examples are those in which the series $R(t)$ has the property that there is a simple formula for the coefficients of $R(t)^n$, and these simple formulas usually come from the exponential function or the binomial theorem. In this section we discuss the simplest case, in which $R(t)=e^t$. Later, in section \ref{s-raney}, we discuss a more complicated example involving the exponential function.

Let $T(x)$ be the  power series satisfying 
\begin{equation*}
T(x) = xe^{T(x)}.
\end{equation*}
Equivalently, $T(x)=(xe^{-x})^{\langle -1\rangle}$ and thus $T(xe^{-x})=x$.
Then by the properties of exponential generating functions (see, e.g., Stanley \cite[Chapter 5]{MR1676282}), $T(x)$ is the exponential generating function for rooted trees and $e^{T(x)}=T(x)/x$ is the exponential generating function for forests of rooted trees. We shall call $T(x)$ the \emph{tree function} and we shall call $F(x) = e^{T(x)}$ the \emph{forest function} The tree function is closely related to the Lambert $W$ function \cite{MR1414285,MR1809988} which may be defined by $W(x) = -T(-x)$. Although the Lambert $W$ function is better known, we will state our results in terms of the tree and forest functions.

Applying \eqref{e-pa}  with $R(t)=e^t$ gives
\begin{equation}
\label{e-Tseries}
T(x)=\sum_{n=1}^\infty n^{n-1}\frac{x^n}{n!}
\end{equation}
and more generally,
\begin{equation}
\label{e-krooted}
\frac{T(x)^k}{k!} = \sum_{n=k}^\infty kn^{n-k-1}\binom nk \frac{x^n}{n!}
\end{equation}
for all positive integers $k$, and
\begin{equation}
\label{e-kforest}
F(x)^k = e^{kT(x)}=\sum_{n=0}^\infty k(n+k)^{n-1} \frac{x^n}{n!}
\end{equation}
for all $k$.
Equation \eqref{e-krooted} implies that there are $kn^{n-k-1}\binom nk$ forests of $n$ rooted trees on $n$ vertices  and equation \eqref{e-kforest} implies that there are $k(n+k)^{n-1}$ forests with vertex set $\{1,2,\dots,n+k\}$  in which the roots are 1, 2, \dots, $k$.

An interesting special case of \eqref{e-kforest} is $k=-1$, which may be rearranged to 
\begin{equation}
\label{e-primeparking}
F(x) = \biggl(1-\sum_{n=1}^\infty (n-1)^{n-1}\dpow xn\biggr)^{-1}.
\end{equation}
Equation \eqref{e-primeparking} may be interpreted in terms of prime parking functions \cite[Exercise 5.49f, p.~95; Solution, p.~141]{MR1676282}.

Applying \eqref{e-qe} gives
\begin{equation}
\label{e-f1}
\frac{F(x)^k}{1-T(x)} = \sum_{n=0}^\infty (n+k)^n \dpow xn.
\end{equation}

A.~Lacasse \cite[p.~90]{lacasse} conjectured an identity that may be written as 
\begin{equation}
\label{e-lacasse}
U(x)^3 - U(x)^2=\sum_{n=0}^\infty n^{n+1}\dpow xn,
\end{equation}
where 
\begin{equation*}
U(x) = \sum_{n=0}^\infty n^n \dpow xn.
\end{equation*}
Proofs of Lacasse's conjecture were given by Chen et al. \cite{MR3066349}, Prodinger \cite{MR3089711}, Sun \cite{MR3066350}, and Younsi \cite{younsi}.
We will prove \eqref{e-lacasse} by showing that both sides are equal to $T(x)/(1-T(x))^3$.

To do this, we first note that the right side of \eqref{e-lacasse} is 
\begin{equation}
\label{e-la1}
\frac{d\ }{dx}\sum_{n=0}^\infty (n-1)^n \dpow xn = 
  \frac{d\ }{dx} \frac{e^{-T(x)}}{1-T(x)} = \frac{e^{-T(x)}T(x) T'(x)}{(1-T(x))^2}.
\end{equation}
Differentiating \eqref{e-Tseries} with respect to $x$ gives
\begin{equation*}
T'(x) = \sum_{n=0}^\infty (n+1)^n\dpow xn,
\end{equation*}
which by \eqref{e-f1} is equal to $e^{T(x)}/(1-T(x))$. Thus \eqref{e-la1} is equal to 
$T(x)/(1-T(x))^3$.
But by \eqref{e-f1}, $U(x) = 1/(1-T(x))$ from which it follows easily that $U(x)^3 - U(x)^2 = T(x)/(1-T(x))^3$.

The series $T(x)$ and $U(x)$ were studied by Zvonkine \cite{zvonkine}, who showed that 
$D^k U(x)$ and $D^{k} U(x)^2$, where $D=x d/dx$, are polynomials in $U(x)$.

The series $\sum_{n=0}^\infty (n+k)^{n+m} x^n\!/n!$, where $m$ is an arbitrary integer, can be expressed in terms of $T(x)$. (The case $k=0$ has been studied by Smiley \cite{smiley}.)
We first deal with the case in which $m$ is negative.

\begin{thm} 
\label{t-mneg}
Let $l$ be a positive integer. Then for some polynomial $p_l(u)$ of degree $l-1$, with coefficients that are rational functions of $k$, we have
\begin{equation}
\label{e-mneg}
\sum_{n=0}^\infty (n+k)^{n-l} \dpow xn = e^{kT(x)} p_l(T(x)).
\end{equation}
The first three polynomials $p_l(u)$ are 
\begin{align*}
p_1(u) &=\frac{1}{k},\\
p_2(u) &=\frac{1}{k^2}-\frac{u}{k(k+1)},\\
p_3(u) &=\frac{1}{k^3}-\frac{(2k+1)}{k^2(k+1)^2}u+ \frac{u^2}{k(k+1)(k+2)}.
\end{align*}
\end{thm}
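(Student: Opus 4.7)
The plan is to set $A_l(x) = \sum_{n=0}^\infty (n+k)^{n-l} x^n/n!$ and prove the theorem by induction on $l$, exploiting a simple first-order differential recurrence between $A_l$ and $A_{l-1}$. A termwise differentiation gives
\begin{equation*}
xA_l'(x) + kA_l(x) = \sum_{n=0}^\infty (n+k)(n+k)^{n-l}\frac{x^n}{n!} = A_{l-1}(x).
\end{equation*}
The base case $l=1$ is immediate from \eqref{e-kforest}: $A_1(x) = e^{kT(x)}/k$, so $p_1(u)=1/k$ has degree $0$.

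For the inductive step, suppose $A_{l-1}(x) = e^{kT(x)}p_{l-1}(T(x))$ with $\deg p_{l-1} = l-2$, and try the ansatz $A_l(x) = e^{kT(x)}p_l(T(x))$. Using the identity $xT'(x) = T(x)/(1-T(x))$ (obtained by differentiating $T = xe^T$, as is already done implicitly in Section \ref{ss-tree}), a short chain-rule computation with $u = T(x)$ yields
\begin{equation*}
xA_l'(x) + kA_l(x) = \frac{e^{kT(x)}}{1-T(x)}\bigl(T(x)p_l'(T(x)) + k\,p_l(T(x))\bigr).
\end{equation*}
Matching this against $A_{l-1}(x) = e^{kT(x)}p_{l-1}(T(x))$ reduces the entire question to the polynomial recurrence
\begin{equation*}
u\,p_l'(u) + k\,p_l(u) = (1-u)\,p_{l-1}(u).
\end{equation*}

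To finish I would note that the linear operator $L_k := u\frac{d}{du} + k$ acts on $u^i$ as multiplication by $i+k$, so over the field $\mathbb{Q}(k)$ of rational functions in $k$ it is invertible on polynomials. Hence the recurrence has a unique polynomial solution $p_l(u)$, and since $(1-u)p_{l-1}(u)$ has degree $l-1$, so does $p_l(u)$. To confirm that the resulting $e^{kT(x)}p_l(T(x))$ really equals $A_l(x)$ (and not just satisfies the same ODE), I observe that the relation $(n+k)a_n = [x^n]A_{l-1}$ uniquely determines a power series solution of $xA'+kA = A_{l-1}$ from its constant term; matching constant terms amounts to checking $p_l(0) = k^{-l}$, which falls out of the recurrence at $u=0$ from $p_{l-1}(0) = k^{-(l-1)}$ by induction.

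The only step requiring any care is the translation of the ODE on $A_l$ into the polynomial recurrence on $p_l$, but this is pure calculus once $xT'/T = 1/(1-T)$ is in hand. Degree bookkeeping and the invertibility of $L_k$ on $\mathbb{Q}(k)[u]$ are routine; no real obstacle arises.
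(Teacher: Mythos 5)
Your proof is correct, but it takes a genuinely different route from the paper. The paper substitutes $x=ue^{-u}$ (so that $T(x)=u$), expands $e^{-ku}\sum_n(n+k)^{n-l}(ue^{-u})^n/n!$ as a double sum, and kills every coefficient of $u^j$ with $j\ge l$ by observing that the inner sum $\sum_{n=0}^j(-1)^{j-n}\binom jn(n+k)^{j-l}$ is the $j$th finite difference of a polynomial of degree less than $j$; this yields the explicit closed form $p_l(u)=\sum_{j=0}^{l-1}\frac{u^j}{j!}\sum_{n=0}^j(-1)^{j-n}\binom jn(n+k)^{-(l-j)}$ in one stroke. You instead set up the Euler-operator recurrence $xA_l'+kA_l=A_{l-1}$, translate it via $xT'=T/(1-T)$ into $u\,p_l'(u)+k\,p_l(u)=(1-u)p_{l-1}(u)$, and invert the diagonal operator $u\,d/du+k$ over $\mathbf{Q}(k)$. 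This is closer in spirit to the inductive argument the paper relegates to Exercise \ref{ex-rm} for the companion Theorem \ref{t-mpos}. Your approach buys a transparent explanation of the degree and of the denominators $k(k+1)\cdots(k+j)$ appearing in the coefficient of $u^j$, and an efficient way to generate the $p_l$ recursively (it reproduces $p_2$ and $p_3$ exactly); the paper's approach buys an explicit non-recursive formula. One small tidy-up: since $[x^n](xA'+kA)=(n+k)a_n$ with $n+k$ invertible in $\mathbf{Q}(k)$ for every $n\ge 0$, the ODE determines its power series solution completely, constant term included, so your separate check that $p_l(0)=k^{-l}$ is a consistency confirmation rather than an additional hypothesis that needs matching.
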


Before proving Theorem \ref{t-mneg} let us check \eqref{e-mneg} for $l=1$ and $l=2$.
The case $l=1$ is equivalent to \eqref{e-kforest}. For $l=2$, we have
\begin{equation*}
e^{kT(x)}=F(x)^k = k\sumz n (n+k)^{n-1}\dpow xn
\end{equation*}
and 
\begin{align*}
e^{kT(x)}T(x)&=F(x)^k\cdot xF(x) = xF(x)^{k+1}\\
  &=x(k+1)\sumz n (n+k+1)^{n-1}\dpow xn\\
  &= (k+1)\sum_{n=0}^\infty n(n+k)^{n-2}\dpow xn.
\end{align*}
Thus
\begin{align*}
e^{kT(x)}\left(\frac{1}{k^2} - \frac{T(x)}{k(k+1)}\right)
   &=\frac 1k \sumz n \bigl((n+k)^{n-1} - n(n+k)^{n-2}\bigr)\dpow xn\\
   &=\sumz n (n+k)^{n-2}\dpow xn.
 \end{align*}
 
The general case can be proved in a similar way. 
(See Exercise \ref{ex-mneg}.) 
However, it is instructive to take a different approach, using finite differences 
(cf.~Gould \cite{MR0480057}), 
that we we will use again in section \ref{ss-fc}.

Let $s$ be a function defined on the nonnegative integers. The \emph{shift operator} $E$ takes $s$ to the function $Es$ defined by $(Es)(n) = s(n+1)$. We denote by $I$ the \emph{identity operator} that takes $s$ to itself and by $\Delta$ the \emph{difference operator} $E-I$, so $(\Delta s)(n) = s(n+1)-s(n)$. It is easily verified that if $s$ is a polynomial of degree $d>0$ with leading coefficient $L$ then $\Delta s$ is a polynomial of degree $d-1$ with leading coefficient $dL$, and if $s$ is a constant then $\Delta s=0$. 
The \emph{$k$th difference} of $s$ is the function $\Delta^k s$. 
Thus if $s$ is a polynomial of degree $d$ with leading coefficient $L$ then  $\Delta^k s=0$ for $k>d$ and $\Delta^d s$ is the constant $d!\,L$.

Since the operators $E$ and $I$ commute, we can expand $\Delta^k=(E-I)^k$ by the binomial theorem to obtain
\begin{equation*}
(\Delta^k s)(n) = \sum_{i=0}^k (-1)^{k-i}\binom{k}{i} s(n+i).
\end{equation*}
We may summarize the result of this discussion in the following lemma.

\begin{lem} Let $s$ be a polynomial of degree $d$ with leading coefficient $L$. Then 
\[\sum_{i=0}^k (-1)^{k-i}\binom{k}{i} s(n+i)\] is 0 if $k>d$ and is the constant $d!\,L$ for $k=d$.
\qed
\end{lem}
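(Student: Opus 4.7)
The plan is to recognize the sum in the statement as the iterated finite difference $(\Delta^k s)(n)$ via the binomial expansion $\Delta^k = (E-I)^k$ displayed just above the lemma. Once that identification is made, the entire argument reduces to tracking how $\Delta$ acts on the leading term of a polynomial, and everything needed has essentially been stated already in the surrounding text.

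The one substantive input I would verify carefully is the claim already quoted in the paragraph preceding the lemma: if $s(x) = Lx^d + (\text{lower order terms})$ with $d \ge 1$, then $\Delta s$ is a polynomial of degree $d-1$ with leading coefficient $dL$. This follows from the direct computation $(x+1)^d - x^d = d x^{d-1} + \cdots$, together with the fact that applying $\Delta$ to a polynomial of degree $<d$ cannot raise its degree to $d-1$ or above. Combined with $\Delta c = 0$ for constants, this is the only nontrivial ingredient.

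From here an easy induction on $k$ gives, for $0 \le k \le d$, that $\Delta^k s$ is a polynomial of degree $d-k$ with leading coefficient $d(d-1)\cdots(d-k+1)\,L$. Setting $k = d$ produces the constant polynomial $d!\,L$, and one further application of $\Delta$ yields $\Delta^{d+1} s = 0$, whence $\Delta^k s = 0$ for every $k > d$. Evaluating at $n$ and invoking the identity $(\Delta^k s)(n) = \sum_{i=0}^{k} (-1)^{k-i}\binom{k}{i} s(n+i)$ supplies the two asserted values.

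There is no real obstacle; the lemma merely packages together the observations made in the paragraph immediately preceding it, and the only calculation one actually performs is the one-line verification that $\Delta$ reduces the degree by one while multiplying the leading coefficient by the old degree.
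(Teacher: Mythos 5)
Your proposal is correct and follows exactly the paper's route: the paper states this lemma as a summary of the immediately preceding discussion of the operators $E$, $I$, and $\Delta=E-I$, identifying the sum as $(\Delta^k s)(n)$ via the binomial expansion and using the fact that $\Delta$ lowers the degree by one while multiplying the leading coefficient by the old degree. Nothing is missing.
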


\begin{proof}[Proof of Theorem  \ref{t-mneg}]
If we set $x=ue^{-u}$ in Theorem \ref{t-mneg} and use the fact that $T(ue^{-u})=u$ we see that Theorem \eqref{t-mneg} is equivalent to the formula
\begin{equation*}
e^{-ku}\sumz n (n+k)^{n-l}\dpow{(ue^{-u})}{n} =p_l(u).
\end{equation*}
We have
\begin{equation}
\label{e-ab1}
\begin{aligned}
e^{-ku}\sumz n (n+k)^{n+m}\dpow{(ue^{-u})}{n}
  &=\sumz n (n+k)^{n+m}\dpow un e^{-(n+k)u}\\
  &=\sumz j \dpow uj\sum_{n=0}^j (-1)^{j-n} \binom jn (n+k)^{j+m}
\end{aligned}
\end{equation}
If $m=-l$ is a negative integer and $j\ge l$ then $(n+k)^{j+m}=(n+k)^{j-l}$ is a polynomial in $n$ of degree less than $j$, so the inner sum in \eqref{e-ab1} is 0. 
Thus \eqref{e-mneg} follows, with 
\begin{equation*}
p_l(u) = \sum_{j=0}^{l-1} \dpow{u}{j} \sum_{n=0}^j  (-1)^{j-n} \binom jn (n+k)^{-(l-j)}.\qedhere
\end{equation*}
\end{proof}

We cannot set $k=0$ in \eqref{e-mneg}, since the $n=0$ term on the left is $k^{-l}$. However it is not hard to evaluate $\sum_{n=1}^\infty n^{n-l} x^n\!/n!$.

\begin{thm}
Let $q_l(u)$ be the result of setting $k=1$ in $p_l(u)$. Then 
\begin{equation*}
\sum_{n=1}^\infty n^{n-l} \dpow xn = T(x) q_l(T(x)).
\end{equation*}
\end{thm}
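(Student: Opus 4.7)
The plan is to derive the identity as an immediate consequence of Theorem~\ref{t-mneg} by specializing $k=1$ and then using the functional equation $T(x)=xe^{T(x)}$ to shift the summation index by one. The point is that although we cannot set $k=0$ in \eqref{e-mneg} (because of the $n=0$ term $k^{-l}$), setting $k=1$ is perfectly legitimate, since the denominators appearing in the coefficients of $p_l(u)$ are products of the form $k(k+1)(k+2)\cdots$, which are nonzero at $k=1$. The resulting series $\sum_{n\ge 0}(n+1)^{n-l}x^n/n!$ is exactly what we want to convert into $\sum_{n\ge 1}n^{n-l}x^n/n!$ after a shift.

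More concretely, setting $k=1$ in \eqref{e-mneg} gives
\begin{equation*}
\sum_{n=0}^\infty (n+1)^{n-l}\dpow{x}{n}=e^{T(x)}q_l(T(x)).
\end{equation*}
I would then multiply both sides by $x$. On the right, $xe^{T(x)}=T(x)$ by the defining equation of the tree function, so the right side becomes $T(x)q_l(T(x))$. On the left,
\begin{equation*}
x\cdot\frac{(n+1)^{n-l}\,x^n}{n!}=\frac{(n+1)^{n-l+1}\,x^{n+1}}{(n+1)!},
\end{equation*}
and after reindexing with $m=n+1$ the sum becomes $\sum_{m=1}^\infty m^{m-l}x^m/m!$, which is the desired left-hand side.

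There is no real obstacle here: once one notices that the shift $n\mapsto n+1$ converts $(n+1)^{n-l}$ into $m^{m-l-1}$, the extra factor of $n+1=m$ supplied by multiplying through by $x$ and rewriting $x^{n+1}/n!$ as $(n+1)\,x^{n+1}/(n+1)!$ is exactly what is needed to raise the exponent back to $m-l$. The only thing to verify is that $q_l(u)$, defined as $p_l(u)|_{k=1}$, is a well-defined polynomial of degree $l-1$, which follows directly from the explicit description of $p_l(u)$ given in the proof of Theorem~\ref{t-mneg}, namely
\begin{equation*}
p_l(u)=\sum_{j=0}^{l-1}\dpow{u}{j}\sum_{n=0}^j(-1)^{j-n}\binom{j}{n}(n+k)^{-(l-j)},
\end{equation*}
whose coefficients are regular at $k=1$.
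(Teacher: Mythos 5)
Your proof is correct and follows essentially the same route as the paper: set $k=1$ in \eqref{e-mneg}, multiply by $x$, use $xe^{T(x)}=T(x)$, and reindex the sum. The additional check that $p_l(u)$ is regular at $k=1$ is a harmless (and reasonable) extra verification.
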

\begin{proof}
We have 
\begin{equation*}
\sum_{n=1}^\infty n^{n-l} \dpow xn  = x\sum_{n=0}^\infty (n+1)^{n-l} \dpow xn
\end{equation*}
By \eqref{e-mneg} with $k=1$ this is 
\begin{equation*}
xe^{T(x)}q_l(T(x)) = T(x)q_l(T(x)).
\end{equation*}
\end{proof}

\begin{ex}
\label{ex-mneg}
Prove Theorem \ref{t-mneg} by finding a formula for the coefficient $t_j(n)$ of $x^n\!/n!$ in $e^{kT(x)}T(x)^j$ and showing that 
$(n+k)^{n-l}$ can be expressed as a linear combination, with coefficients that are rational functions of $k$, of $t_0(n),\dots, 
t_{l-1}(n)$.
\end{ex}

Next we consider  $\sum_{n=0}^\infty (n+k)^{n+m} x^n\!/n!$ where $m$ is a nonnegative integer. (We evaluated the case $k=0, m=1$ in our discussion of Lacasse's conjecture.)

\begin{thm}
\label{t-mpos}
Let $m$ be a nonnegative integer. Then there exists a polynomial $r_m(u,k)$, with integer coefficients, of degree $m$ in $u$ and degree $m$ in $k$, such that
\begin{equation*}
\sum_{n=0}^\infty (n+k)^{n+m} \dpow xn = e^{kT(x)}\frac{r_m(T(x), k)}{(1-T(x))^{2m+1}}.
\end{equation*}
The first three polynomials $r_m(u,k)$ are
\begin{align*}
r_0(u,k)&=1,\\
r_1(u,k)&=k + (1-k)u,\\
r_2(u,k)&=k^2 + (1+3k-2k^2)u+ (2-3k+k^2)u^2.
\end{align*}
\end{thm}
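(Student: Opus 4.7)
The plan is to introduce the differential operator $L=x\dfrac{d}{dx}+k$, which multiplies the coefficient of $x^n/n!$ in any exponential generating function by $n+k$. Combining this observation with \eqref{e-f1}, which gives $\sum_{n\ge0}(n+k)^n\,x^n/n!=e^{kT(x)}/(1-T(x))$, yields
\[
\sum_{n=0}^\infty (n+k)^{n+m}\frac{x^n}{n!}
=L^m\!\left(\frac{e^{kT(x)}}{1-T(x)}\right),
\]
so the theorem reduces to analyzing this iterate of $L$.

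To compute it I would change variables to $u=T(x)$. Differentiating $T=xe^T$ gives $xT'(x)=T(x)/(1-T(x))$, so on any function of $u=T(x)$ the operator $L$ acts as $\tilde L=\dfrac{u}{1-u}\dfrac{d}{du}+k$. The theorem then follows from the claim that
\[
\tilde L^{\,m}\!\left(\frac{e^{ku}}{1-u}\right)=\frac{e^{ku}\,r_m(u,k)}{(1-u)^{2m+1}}
\]
for a polynomial $r_m(u,k)\in\Z[u,k]$ of degree $m$ in each of $u$ and $k$, with $r_0=1$. Applying $\tilde L$ once more and clearing denominators produces the recursion
\[
r_{m+1}(u,k)=u\!\left[(1-u)\!\left(k\,r_m+\frac{\partial r_m}{\partial u}\right)+(2m+1)\,r_m\right]+k(1-u)^2\,r_m,
\]
which patently preserves integrality and so gives $r_m\in\Z[u,k]$ inductively.

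The delicate point, which I expect to be the main obstacle, is verifying the two degree claims against this recursion. A priori $r_{m+1}$ could have degree $m+2$ in $u$, but the coefficient of $u^{m+2}$ receives equal and opposite contributions from $u\cdot k r_m\cdot(1-u)$ and from $k(1-u)^2 r_m$, so it cancels. Tracking the next coefficient then gives $[u^{m+1}]r_{m+1}=(m+1-k)\cdot[u^m]r_m$, so by induction the leading $u$-coefficient of $r_m$ is the nonzero polynomial $(1-k)(2-k)\cdots(m-k)$, establishing $\deg_u r_m=m$. A parallel extraction of the $k^{m+1}$-part of the recursion shows that, writing $\alpha_m(u)=[k^m]r_m$, we have $\alpha_{m+1}(u)=(1-u)\alpha_m(u)$, and hence $\alpha_m(u)=(1-u)^m$, which in particular is nonzero, giving $\deg_k r_m=m$. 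Once these two cancellations are in hand the induction closes and the theorem is proved.
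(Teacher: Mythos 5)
Your proof is correct; I verified the recurrence
\begin{equation*}
r_{m+1}=u(1-u)\Bigl(k\,r_m+\frac{\partial r_m}{\partial u}\Bigr)+(2m+1)\,u\,r_m+k(1-u)^2\,r_m
\end{equation*}
(it reproduces $r_1$ and $r_2$), the cancellation of the $u^{m+2}$ coefficient, and the two leading-coefficient recursions $[u^{m+1}]r_{m+1}=(m+1-k)[u^m]r_m$ and $[k^{m+1}]r_{m+1}=(1-u)[k^m]r_m$, which give $[u^m]r_m=(1-k)(2-k)\cdots(m-k)$ and $[k^m]r_m=(1-u)^m$ and hence the exact degrees. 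This is a genuinely different route from the proof sketched in the paper, which substitutes $x=ue^{-u}$ as in Theorem \ref{t-mneg}, recognizes the resulting coefficients as the weighted Stirling numbers $R(j+m,j,k)$ of \eqref{e-ws}, and deduces the shape $r_m(u,k)/(1-u)^{2m+1}$ from the fact that $R(j+m,j,k)$ is a polynomial in $j$ of degree $2m$; that argument only bounds $\deg_u r_m$ by $2m$, and the paper explicitly omits the proof that the degree is actually $m$. Your operator $L=x\,d/dx+k$, transported to $\tilde L=\frac{u}{1-u}\frac{d}{du}+k$ via $xT'=T/(1-T)$, is essentially the inductive approach the paper relegates to Exercise \ref{ex-rm} (stated there with $d/dx$, which shifts both $m$ and $k$; your version keeps $k$ fixed, which is a little cleaner). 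The trade-off: the paper's sketch buys a combinatorial interpretation of the coefficients via weighted Stirling numbers (and, for $k=1$, second-order Eulerian numbers), while your induction actually establishes everything the theorem asserts --- integrality, degree $m$ in $u$, and degree $m$ in $k$ --- none of which the sketch fully proves.
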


\begin{proof}[Proof sketch]
We give here a sketch of a proof that tells us something interesting about the polynomials $r_m(u,k)$; for a more direct approach see Exercise \ref{ex-rm}.

As in the proof of Theorem \ref{t-mneg} we set $x=ue^{-u}$ and consider the sum 
on the left side of \eqref{e-ab1}.

Following Carlitz \cite{MR570168}, we define the \emph{weighted Stirling numbers of the second kind} $R(n,j,k)$ by 
\begin{equation*}
R(n,j,k) = \frac{1}{j!}\sum_{i=0}^j (-1)^{j-i}\binom ji (k+i)^n,
\end{equation*}
so that 
\begin{equation}
\label{e-ws}
\sum_{n=0}^\infty R(n,j,k) \dpow xn = e^{k x}\frac{(e^x-1)^j}{j!}.
\end{equation}
(For $k=0$, $R(n,j,k)$ reduces to the ordinary Stirling number of the second kind $S(n,j)$.)
Equation \eqref{e-ws} implies that the $R(n,j,k)$ is a polynomial in $k$ with integer coefficients.
Then  \eqref{e-ab1} is equal to 
\begin{math}
\sum_{j=0}^\infty  R(j+m,j,k) u^j.
\end{math}
It is not hard to show that for fixed $m$, $R(j+m, j,k)$ is a polynomial in $j$ of degree $2m$.  

Thus
\begin{equation*}
\sum_{j=0}^\infty  R(j+m,j,k) u^j= \frac{r_m(u,k)}{(1-u)^{2m+1}}
\end{equation*}
for some polynomial $r_m(u,k)$ of degree at most $2m$. 

We omit the proof that $r_m(u,k)$ actually has degree $m$ in $u$.
\end{proof}

For $k=1$, the coefficients of $r_m(u,1)$ are positive integers, sometimes called 
\emph{second-order Eulerian numbers}; see, for example, \cite[p.~270]{MR1397498} and \cite{MR0462961}.  

\begin{ex}
\label{ex-rm} Give an inductive proof of Theorem \ref{t-mpos} 
using the fact that if $W(m,k)=\sum_{n=0}^\infty (n+k)^{n+m} x^n\!/n!$ then 
$dW(m,k)/dx = W(m+1,k+1)$.
\end{ex}

We can get convolution identities by applying \eqref{e-kforest} and \eqref{e-f1} 
to 
\begin{equation*}
\frac{F(x)^{k+l}}{1-T(x)} =F(x)^k \frac{F(x)^l}{1-T(x)}
\end{equation*}
and 
\begin{equation*}
F(x)^{k+l} =F(x)^k F(x)^l.
\end{equation*}
The first identity yields 
\begin{equation*}
(n+k+l)^n = \sum_{i=0}^n \binom ni  k(i+k)^{i-1}(n-i+l)^{n-i}.
\end{equation*}
and the second yields 
\begin{equation*}
(k+l)(n+k+l)^{n-1}=\sum_{i=0}^n \binom ni k(i+k)^{i-1} l (n-i+l)^{n-i-1}.
\end{equation*}
Note that these are identities of polynomials in $k$ and $l$.
If we set $k=x$ and $l=y-n$ in the first formula we get the nicer looking
\begin{equation}
\label{e-abel1}
(x+y)^n = \sum_{i=0}^n \binom ni x (x+i)^{i-1}(y-i)^{n-i}.
\end{equation}
Replacing $x$ with $x/z$ and $y$ with $y/z$ in \eqref{e-abel1}, and multiplying through by $z^n$ gives the homogeneous form
\begin{equation}
\label{e-abel2}
(x+y)^n = \sum_{i=0}^n \binom ni x (x+iz)^{i-1}(y-iz)^{n-i}
\end{equation}
which was proved by N. H.  Abel in 1826 \cite{MR1577607}. Note that for $z=0$, 
\eqref{e-abel2} reduces to the binomial theorem.
Riordan \cite[pp.~18--27]{MR0231725} gives a comprehensive account of Abel's identity and its generalizations, though he does not use Lagrange inversion.

\begin{ex}
(Chu \cite{MR2644861}.) Prove Abel's identity \eqref{e-abel1} using finite differences. 
(Start by expanding $(y-i)^{n-i}=[(x+y)-(x+i)]^{n-i}$ by the binomial theorem.)
\end{ex}

\subsection{Fuss-Catalan numbers}
\label{ss-fc}
The Fuss-Catalan (or Fu\ss-Catalan) numbers of order $p$ (also called generalized Catalan numbers) are the 
numbers $\frac{1}{pn+1}\binom{pn+1}{n} = \frac{1}{(p-1)n+1} \binom{pn}{n}$, which reduce to Catalan numbers for $p=2$. They were first studied by N. Fuss in 1791
\cite{fuss}. As we shall see, they are the 
coefficients of the power series $c_p(x)$, satisfying the functional equation  
\begin{equation}
\label{e-fc1}
c_p(x) = 1+ xc_p(x)^p,
\end{equation}
or equivalently,
\begin{equation*}
c_p(x) = \frac{1}{1-xc_p(x)^{p-1}},
\end{equation*}
as was shown using Lagrange inversion by Liouville \cite{Liouville}.

An account of these generating functions can be found Graham, Knuth, and Patashnik 
\cite[pp.~200--204]{MR1397498}.

It follows easily from \eqref{e-fc1} that 
\begin{gather}
c_p(x) -1 = xc_p(x)^p=\left(\frac{x}{(1+x)^p}\right)\inv,\label{e-cpinv}\\
xc_p(x)^{p-1}=\bigl(x(1-x)^{p-1}\bigr)\inv,\notag\\
xc_p(x^{p-1}) = (x-x^p)\inv,\notag
\end{gather}
and
\[c'_p(x) = \frac{c_p(x)^p}{1-pxc_p(x)^{p-1}}.\]

Lagrange inversion gives
\begin{equation}
\label{e-cp1}
c_p(x)^k =\sum_{n=0}^\infty \frac{k}{pn+k} \binom{pn+k}{n} x^n
\end{equation}
for all $k$.
With $R(t) = 1+xt^p$ we have $R'(t) = pxt^{p-1}$ so
\[1-R'(c_p(x)) = 1-pxc_p(x)^{p-1}=1-p(c_p(x) -1)/c_p(x)
  =1-p+pc_p(x)^{-1},\]
and thus by \eqref{e-r2},
\begin{equation}
\label{e-cp2}
\sum_{n=0}^\infty \binom{pn+k}{n} x^n=
\frac{c_p(x)^k}{1-pxc_p(x)^{p-1}}
=\frac{c_p(x)^{k+1}}{1-(p-1)(c_p(x) -1)}.
\end{equation}
Equivalently,
\begin{equation*}
\sum_{n=0}^\infty \binom{pn+k}{n} \left(\frac{x}{(1+x)^p}\right)^n 
  = \frac{(1+x)^{k+1}}{1-(p-1)x}\end{equation*}
and
\begin{equation*}
\sum_{n=0}^\infty \binom{pn+k}{n} \bigl(x(1-x)^{p-1}\bigl)^n
=\frac{1}{(1-px)(1-x)^k}.
\end{equation*}

The convolution identities obtained from \eqref{e-cp1} and \eqref{e-cp2}, known as Rothe-Hagen identities \cite{rothe, hagen, MR0087621} are
\begin{equation*}
\sum_{i+j=n} \frac{k}{pi+k}\binom{pi+k}{i}\cdot\frac{l}{pj+l}\binom{pj+l}{j}=\frac{k+l}{pn+k+l}\binom{pn+k+l}{n}.
\end{equation*}
and 
\begin{equation*}
\sum_{i+j=n} \frac{k}{pi+k}\binom{pi+k}{i}\binom{pj+l}{j}=\binom{pn+k+l}{n}.\end{equation*}

\begin{ex}
Show that $c_{-p}(x) = 1/c_{p+1}(-x)$.
\end{ex}

\begin{ex} Prove that $c_{p+q}(x)= c_p\bigl(xc_{p+q}(x)^q\bigr)$ 
(a) combinatorially
(b) algebraically 
(c) using Lagrange inversion.
\end{ex}

\begin{ex} 
Prove that 
\begin{equation*}
\bigl(x c_p(x^a)^b\bigr)\inv = x c_{ab-p+1}(-x^a)^b
\end{equation*}
(a) algebraically (b) using Lagrange inversion.
In particular, as noted by Dennis Stanton, if $f(x) = xc(x)^3$ then $f(x)\inv = -f(-x)$. 
\end{ex}

\begin{ex} (Mansour and Sun \cite[Example 5.6]{MR2433584}, Sun \cite{MR2644857}.)
Show that
\begin{equation*}
\frac{1}{1-x}c_3\left(\frac{x^2}{(1-x)^3}\right) = c_2(x).
\end{equation*}
\end{ex}

\begin{ex} Prove \eqref{e-cp1} and \eqref{e-cp2} by finite differences.
\end{ex}

\begin{ex} (Chu \cite{MR2644861}.) Prove the Hagen-Rothe identities by finite differences.
\end{ex}

Next, we prove Jensen's formula \cite{jensen}
\begin{equation}
\label{e-jensen}
\sum_{l=0}^n \binom{j+pl}{l}\binom{r-pl}{n-l}=\sum_{i=0}^n\binom{j+r-i}{n-i}p^i.
\end{equation}
By \eqref{e-cp2} we have
\begin{align*}
\sumz l \binom{pl+j}{l}x^l \sumz m \binom{pm+k}{m}x^m
 &= \frac{c_p(x)^{j+k}}{(1-pxc_p(x)^{p-1})^2}\\
 &= \frac{c_p(x)^{j+k}}{1-pxc_p(x)^{p-1}}\sum_{i=0}^\infty p^i x^i c_p(x)^{\p i}\\
 &=\sumz i \frac{p^i x^i c_p(x)^{j+k+\p i}}{1-pxc_p(x)^{p-1}}\\
 &=\sumz i p^i x^i \sum_{m=0}^\infty\binom{pm + j+k +\p i}{m}x^m\\
 &=\sumz n x^n \sum_{i=0}^n \binom{pn+j+k-i}{n-i}p^i.
\end{align*}
Equating coefficients of $x^n$ on both sides gives
\begin{equation*}
\sum_{l=0}^n \binom{pl+j}{l}\binom{p(n-l)+k}{n-l}=\sum_{i=0}^n\binom{pn+j+k-i}{n-i}p^i.
\end{equation*}
Setting $k=r-pn$ gives \eqref{e-jensen}.

We also have analogues of Theorems \ref{t-mneg} and \ref{t-mpos} for Fuss-Catalan numbers.
\begin{thm}
\label{t-fcneg}
Let $i$ and $j$ be nonnegative  integers with $i<j$.
Then 
\begin{equation*}
\sum_{n=0}^\infty \frac{(pn+i)!}{n!\, ((p-1)n+j)!} \frac{x^n}{(1+x)^{pn+i+1}}
\end{equation*}
is a polynomial $u_{i,j}(x)$ in $x$ of degree $j-i-1$.
\end{thm}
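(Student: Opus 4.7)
The plan is to imitate the proof of Theorem~\ref{t-mneg}: we compute the coefficient of $x^k$ in the given series as an explicit alternating binomial sum, then invoke the finite-difference lemma to show it vanishes for $k \ge j-i$. Expanding $(1+x)^{-pn-i-1}$ by the binomial series gives
\[
\cf{x^k}\frac{x^n}{(1+x)^{pn+i+1}} = (-1)^{k-n}\binom{(p-1)n+i+k}{k-n},
\]
which vanishes for $n>k$, so only finitely many $n$ contribute to each coefficient of the series. Writing $\binom{(p-1)n+i+k}{k-n} = ((p-1)n+i+k)!/\bigl((k-n)!\,(pn+i)!\bigr)$, the factor $(pn+i)!$ cancels against the numerator of $(pn+i)!/(n!((p-1)n+j)!)$, and the coefficient of $x^k$ in the series becomes
\[
\frac{1}{k!}\sum_{n=0}^k (-1)^{k-n}\binom{k}{n}\frac{((p-1)n+i+k)!}{((p-1)n+j)!}.
\]

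For $k \ge j-i$, the ratio $((p-1)n+i+k)!/((p-1)n+j)!$ equals the product $\prod_{s=j+1}^{i+k}\bigl((p-1)n+s\bigr)$, a polynomial in $n$ of degree $i+k-j = k-(j-i)$, which is strictly less than $k$ because $j>i$. The finite-difference lemma stated immediately before the proof of Theorem~\ref{t-mneg} then forces the alternating sum to be $0$, so $\cf{x^k}$ of the series vanishes for every $k \ge j-i$. This proves the series is a polynomial $u_{i,j}(x)$ of degree at most $j-i-1$.

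To confirm that the degree is exactly $j-i-1$, set $k=j-i-1$; then $i+k=j-1$, the ratio collapses to $1/\bigl((p-1)n+j\bigr)$, and using $1/((p-1)n+j)=\int_0^1 t^{(p-1)n+j-1}\,dt$ the leading coefficient becomes
\[
\frac{1}{(j-i-1)!}\int_0^1 t^{j-1}\bigl(t^{p-1}-1\bigr)^{j-i-1}\,dt,
\]
whose integrand has constant sign on $(0,1)$ and is therefore nonzero for $p>1$ (equivalently, one can evaluate the alternating sum in closed form via partial fractions). The only real obstacle is executing the binomial cancellation carefully enough to expose the polynomial-in-$n$ structure; once this is done, the finite-difference lemma already developed in the paper takes care of everything.
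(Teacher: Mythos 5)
Your proof is correct and follows essentially the same route as the paper: expand $(1+x)^{-(pn+i+1)}$, identify the coefficient of $x^k$ as a $k$th finite difference of $((p-1)n+i+k)!/((p-1)n+j)!$, which for $k\ge j-i$ is a polynomial in $n$ of degree $k-(j-i)<k$ and hence annihilated by the difference lemma. The only (cosmetic) divergence is at the last step, where the paper certifies the nonvanishing of the coefficient of $x^{j-i-1}$ via the identity $\sum_{n=0}^m(-1)^n\binom mn\frac a{n+a}=\binom{m+a}{a}^{-1}$, while you use the beta-integral representation $\int_0^1 t^{j-1}(t^{p-1}-1)^{j-i-1}\,dt$ and a constant-sign argument; both are valid.
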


\begin{proof}
We have
\begin{align*}
\sum_{n=0}^\infty \frac{(pn+i)!}{n!\, ((p-1)n+j)!}& \frac{x^n}{(1+x)^{pn+i+1}}\\
  &=\sum_{n=0}^\infty \frac{(pn+i)!}{n!\, ((p-1)n+j)!} x^n 
  \sum_{l=0}^\infty (-1)^l \binom{pn+i+l}{l} x^l\\
  &=\sum_{m=0}^\infty x^m \sum_{n=0}^m \frac{(pn+i)!}{n!\, ((p-1)n+j)!}
    (-1)^{m-n} \binom{\p n+i+m}{m-n}.
\end{align*}
For $m\ge j-i$, the coefficient of $x^m$ may be rearranged to
\begin{equation*}
\frac{\bigl(m-(j-i)\bigr)!}{m!}\sum_{n=0}^m (-1)^{m-n}\binom{m}{n} \binom{\p n+i+m}{m-(j-i)}.
\end{equation*}
The sum is the $m$th difference of a polynomial of degree less than $m$ and is therefore 0. For $m=j-i-1$, the coefficient of $x^{j-i-1}$ reduces to 
\begin{equation*}
\frac{1}{m!}\sum_{n=0}^{m}(-1)^{m-n}\binom{m}{n} \frac{1}{\p n+j},
\end{equation*}
which is nonzero by the well-known identity
\begin{equation*}
\sum_{n=0}^m (-1)^n \binom mn \frac{a}{n+a} = \binom{m+a}{a}^{-1},
\end{equation*}
so the degree of the polynomial is not less than $j-i-1$.
\end{proof}
The first few values of these polynomials are 
\begin{align*}
u_{i,i+1}(x) &= \frac{1}{i+1}\\
u_{i,i+2}(x) &=\frac{1}{(i+1)(i+2)} -\frac{p-1}{(i+2)(p+i+1)}x\\
u_{i,i+3}(x) &= \frac{1}{(i+1)(i+2)(i+3)} -\frac{(p-1)(p+2i+4)}{(i+2)(i+3)(p+i+1)(p+i+2)}x\\
  &\quad+\frac{(p-1)^2}{(i+3)(p+i+2)(2p+i+1)}x^2
\end{align*}

As a simple example of Theorem \ref{t-fcneg}, the number of 2-stack-sortable permutations of $\{1,2,\dots,n\}$ is 
\[a_n=2\frac{(3n)!}{(n+1)!\,(2n+1)!}= 4\frac{(3n)!}{n!\,(2n+2)!}\]
(see \cite[Sequence A000139]{oeis}),
so by Theorem \ref{t-fcneg}, with $p=3$, $i=0$, and $j=2$, 
\begin{math}
\sumz n a_n {x^n}/(1+x)^{3n+1}
\end{math}
is a polynomial of degree 1, which is easily computed to be $2-x$.
Then by  \eqref{e-cpinv}, we find that 
\[\sumz n a_n x^n = 3c_3(x) -  c_3(x)^2,\] which can  be checked directly from 
\eqref{e-cp1}.

There is a  result  similar to Theorem  \ref{t-fcneg} for $i\ge j$, which we state without proof.
\begin{thm}
\label{t-ij}
Let $i$ and $j$ be nonnegative integers with $i\ge j$. Then 
\begin{equation*}
(1-\p x)^{2(i-j)+1}\sum_{n=0}^\infty \frac{(pn+i)!}{n!\, ((p-1)n+j)!} \frac{x^n}{(1+x)^{pn+i+1}}
\end{equation*}
is a polynomial in $x$ of degree at most $i-j$.
\qed
\end{thm}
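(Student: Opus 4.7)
My plan is to leverage the identity \eqref{e-cp2} as a base case and then introduce the factor $P(n) := \prod_{l=j+1}^{i}((p-1)n+l)$ by applying a commuting family of differential operators, keeping careful track of the power of $\beta := 1-(p-1)x$ that accrues in the denominator. Writing $m = i-j$, I first note the factorization
\[
\frac{(pn+i)!}{n!\,((p-1)n+j)!} = \binom{pn+i}{n}\, P(n),
\]
so that the substitution $u = x/(1+x)^p$ in \eqref{e-cp2} yields the base identity
\[
\sum_{n=0}^\infty \binom{pn+i}{n}\,\frac{x^n}{(1+x)^{pn+i+1}} = \frac{1}{\beta},
\]
and the problem reduces to producing the factor $P(n)$ by operators acting on $1/\beta$.

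The relevant operator is $M$, defined on series $h = \sum_{n}a_n x^n/(1+x)^{pn+(i+1)}$ by $Mh = \sum_{n}na_n x^n/(1+x)^{pn+(i+1)}$. Writing $h = (1+x)^{-(i+1)}g(u)$ with $g(u)=\sum_{n}a_n u^n$, so that $Mh = (1+x)^{-(i+1)} u g'(u)$, the chain rule $du/dx = \beta/(1+x)^{p+1}$ gives the closed form
\[
Mh = \frac{x\bigl[(i+1)h + (1+x)h'\bigr]}{\beta}.
\]
The operators $(p-1)M + l$ for varying $l$ commute (they are polynomials in the single operator $M$), so I may write $S(x) = \prod_{l=j+1}^{i}((p-1)M + l)[1/\beta]$ with the factors in any order I choose.

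The heart of the argument is a cancellation lemma. For $f = N/\beta^{k}$ with $N$ a polynomial in $x$ of degree at most $d$, a direct expansion shows $((p-1)M + l)f = \tilde N/\beta^{k+2}$ with $\deg \tilde N \le d+2$, and the coefficient of $x^{d+2}$ in $\tilde N$ is $(p-1)^2\alpha\bigl[-(i+1+d)+k+l\bigr]$ where $\alpha$ is the leading coefficient of $N$; hence $\deg \tilde N \le d+1$ precisely when $l = (i+1)+d-k$. Starting from $f_0 = 1/\beta$ (state $(d_0,k_0) = (0,1)$) and applying the factors in the order $l_1 = i,\ l_2 = i-1,\ \ldots,\ l_m = j+1$, I will show inductively that $f_s = N_s/\beta^{2s+1}$ with $\deg N_s \le s$: indeed, the state $(d,k) = (s-1, 2s-1)$ before the $s$-th step makes $(i+1)+(s-1)-(2s-1) = i-s+1$ the cancellation value, which is exactly $l_s$, so cancellation fires and the degree advances by only $1$ while the $\beta$-power advances by $2$. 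After all $m$ steps we obtain $S(x) = N_m/\beta^{2m+1}$ with $\deg N_m \le m$, so $(1-(p-1)x)^{2(i-j)+1} S(x) = N_m$ is a polynomial in $x$ of degree at most $i-j$, as desired.

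The main obstacle I expect is the careful bookkeeping inside the cancellation lemma: the coefficient of $x^{d+2}$ in $\tilde N$ collects three separate contributions (from $(p-1)x\beta((i+1)N+(1+x)N')$, from $k(p-1)^2 x(1+x)N$, and from $lN\beta^2$), and these must be combined to expose the stated expression $(p-1)^2\alpha[-(i+1+d)+k+l]$. Once that formula is in hand, the arithmetic identity $i-s+1 = (i+1)+(s-1)-(2s-1)$ propagates the induction mechanically, and commutativity of the operator factors guarantees that the final polynomial $N_m$ is the same regardless of the order of application.
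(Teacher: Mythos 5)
Your proof is correct. Note that the paper states Theorem \ref{t-ij} without proof and leaves it as an exercise, so there is no in-text argument to compare against; the intended route, judging from the proofs of the companion results (Theorem \ref{t-fcneg} and Theorem \ref{t-mpos}), is via finite differences: expand $(1+x)^{-(pn+i+1)}$ by the binomial theorem, rearrange the coefficient of $x^m$ into $\frac{1}{m!}\sum_{n}(-1)^{m-n}\binom{m}{n}Q(n)$ with $Q$ a polynomial in $n$ of degree $m+(i-j)$, and then show that this $m$th difference is, for large $m$, $(p-1)^m$ times a polynomial in $m$ of degree $2(i-j)$. Your operator argument is a genuinely different and self-contained route that avoids that asymptotic analysis. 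I checked the key steps: the base identity $\sum_n\binom{pn+i}{n}x^n/(1+x)^{pn+i+1}=1/\beta$ does follow from the identity displayed after \eqref{e-cp2} with $k=i$; the factorization of $(pn+i)!/\bigl(n!\,((p-1)n+j)!\bigr)$ as $\binom{pn+i}{n}\prod_{l=j+1}^{i}((p-1)n+l)$ is valid for $i\ge j$; the closed form $Mh=x\bigl[(i+1)h+(1+x)h'\bigr]/\beta$ follows from $du/dx=\beta/(1+x)^{p+1}$; and expanding $\tilde N=(p-1)x\beta\bigl[(i+1)N+(1+x)N'\bigr]+k(p-1)^2x(1+x)N+lN\beta^2$ confirms the top coefficient $(p-1)^2\alpha\bigl[-(i+1+d)+k+l\bigr]$, so the cancellation value $i-s+1=(i+1)+(s-1)-(2s-1)$ fires at every step and the state $(\deg N_s,\,\beta\text{-power})=(\le s,\,2s+1)$ propagates. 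What your approach buys is an explicit inductive recurrence for the numerator polynomial $N_m$; what the finite-difference approach buys is uniformity with the $i<j$ case. One small imprecision worth fixing: in the cancellation lemma, $\alpha$ should be the coefficient of $x^d$ in $N$ (which may be zero if $\deg N<d$); the induction is unaffected, since in that case $\deg\tilde N\le\deg N+2\le d+1$ holds trivially without any cancellation.
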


\begin{ex} 
Prove Theorem \ref{t-ij}.
\end{ex}

\subsection{Narayana and Fuss-Narayana numbers}

The Narayana numbers may be defined by $N(n,i) = \frac{1}{n} \binom ni \binom n{i-1}$ for $n\ge1$. 
They have many combinatorial interpretations, in terms of Dyck paths, ordered trees, binary trees, and noncrossing partitions.

It is not hard to see from the formula for Narayana numbers that $N(n,i) = N(n, n+1-i)$. A generating function for the Narayana that exhibits this symmetry is given by the solution to the equation
\begin{equation}
\label{e-nara}
f = (1+xf)(1+yf).
\end{equation}
Lagrange inversion gives
\begin{align*}
f^k &= \sum_{n=0}^\infty \frac{k}{n}\cf{t^{n-k}} (1+xt)^n(1+yt)^n\\
  &=\sum_{n=0}^\infty \frac{k}{n}\cf{t^{n-k}}
     \sum_{i,j}\binom ni \binom nj x^i y^j t^{i+j}\\
  &=\sum_{i,j=0}^\infty \frac{k}{i+j+k}\binom{i+j+k}i\binom{i+j+k}j x^i y^j.
\end{align*}
In particular
\begin{align*}
f&=\sum_{i,j=0}^\infty \frac{1}{i+j+1}\binom{i+j+1}i\binom{i+j+1}{i+1} x^i y^j\\
  &=\sum_{n=1}^\infty \sum_{i=0}^{n-1} N(n,i+1) x^i y^{n-i-1}.
\end{align*}
Equation \eqref{e-nara} can be solved explicitly to give
\begin{equation*}
f=\frac{1-x-y-\sqrt{(1-x-y)^2 - 4xy}}{2xy}.
\end{equation*}

\begin{ex} Prove that
\begin{equation*}\frac{(1+x f)^r(1+y f)^s}{\sqrt{(1-x-y)^2-4xy}}
  =\sum_{i,j}\binom{r+i+j}{ i}\binom{s+i+j}{ j}x^iy^j
\end{equation*}
and
\begin{align*}
(1+x f)^r(1+y f)^s &=\sum_{i,j}\left[\binom{r+i+j-1}{ i}
    \binom{s+i+j}{ j}-\binom{r+i+j}{ i}\binom{s+i+j-1}{ j-1}
    \right]x^iy^j\\
  &=\sum_{i,j}\frac{rs+ri+sj}{(r+i+j)(s+i+j)}\binom{r+i+j}{ i}
    \binom{s+i+j}{ j}x^iy^j.
\end{align*}
The first formula is equivalent to a well-known generating function for Jacobi polynomials; see Carlitz \cite{MR0222357}.
\end{ex}

We may generalize \eqref{e-nara} to 
\begin{equation}
\label{e-mnar1}
f= (1+x_1f)^{r_1}(1+x_2f)^{r_2}\cdots (1+x_m f)^{r_m},
\end{equation}
for which Lagrange inversion gives
\begin{equation}
\label{e-mnar2}
f^k = \sum_{n=k}^\infty \sum_{i_1+\cdots+i_m = n-k}
  \frac{k}{n}\binom{r_1n}{i_1}\cdots \binom{r_m n}{i_m} x_1^{i_1}\cdots x_m^{i_m}.
\end{equation}
These numbers reduce to Catalan numbers for $k=m=1, r_1=2$ and to Narayana numbers for $k=1, m=2, r_1=r_2=1$. For $k=1, m=2, r_1=1$ they are sometimes called Fuss-Narayana numbers; see  Armstrong \cite{MR2561274}, Cigler \cite{MR919877}, Edelman \cite{Edelman}, Eu and Fu \cite{Eu-Fu}, and Wang \cite{wang}.  
The numbers for  $k=1$, $r_i=1$ for all $i$ have been 
called generalized Fuss-Narayana numbers by Lenczewski and Sa{\l}apata \cite{MR3084583}; they have also been studied by 
Edelman \cite{Edelman}, Stanley \cite{MR1444167}, and Xu \cite{dxu}.
The case $k=1, m=3, r_2=-r_1, r_3=1$ of these numbers was considered by Krattenthaler, \cite[equation (31)]{MR2249265}. We note that if $r_1=\cdots=r_m$, then $f$ is a symmetric function of $x_1, \dots, x_m$, and this symmetric function arises in the study of algebraic aspects of parking functions \cite{MR1444167}.

If we set $r_i=-s_i$ in \eqref{e-mnar1} and replace $x_i$ with $-x_i$, and $f$ with $g$, then \eqref{e-mnar1} becomes
\begin{equation}
\label{e-mnar3}
g= \frac{1}{(1-x_1g)^{s_1}(1-x_2g)^{s_2}\cdots (1-x_m g)^{s_m}},
\end{equation}
and, with the formula $\binom{-a}{i} = (-1)^{i}\binom{a+i-1}{i}$, \eqref{e-mnar2} becomes
\begin{equation}
\label{e-mnar4}
g^k = \sum_{n=k}^\infty \sum_{i_1+\cdots+i_m = n-k}
  \frac{k}{n}\binom{s_1n+i_1-1}{i_1}\cdots \binom{s_m n+i_m-1}{i_m} x_1^{i_1}\cdots x_m^{i_m}. 
\end{equation}
These numbers reduce to Catalan numbers for $k=m=s_1=1$. For $s_1=\cdots =s_m=1$ they have been considered by Aval \cite{MR2438172} and (for $k=1$) by Stanley \cite{MR1444167}.

Of special interest are the cases of \eqref{e-mnar1} that reduce to a quadratic equation, since in these cases there are simple explicit formulas for $f$.
If we take $m=1$, $r_1=r_2=1$, and $r_3=-1$ then with a change of variable names and one sign we have
\begin{equation*}
f=\frac{(1+xf)(1+yf)}{1-zf},
\end{equation*}
with the solution 
\[f = \frac{1-x-y -\sqrt{(1-x-y)^2 - 4xy - 4z}}{2(xy+z)},\]
and \eqref{e-mnar2} gives
\begin{equation*}
f^k = \sum_{n=k}^\infty \sum_{i_1+i_2+i_3= n-k}
  \frac{k}{n}\binom{n}{i_1}\binom{n}{i_2}\binom{n+i_3-1}{i_3} x^{i_1} y^{i_2}z^{i_3}.
\end{equation*}
Another case of \eqref{e-mnar1} that reduces to a quadratic is 
$f = \sqrt{(1+xf)(1+yf)}$; we leave the details to the reader.

\subsection{Raney's equation}
\label{s-raney}

G. Raney \cite{MR0166114} considered the equation 
\begin{equation}
\label{e-raney1}
f = \sum_{m=1}^\infty A_m e^{B_m f},
\end{equation}
in which $f$ is a  power series in the indeterminates $A_m$ and $B_m$. He used Pr\"ufer's correspondence to give a combinatorial derivation of a formula for the coefficients in this power series.

We can use Lagrange inversion to give a formula for the coefficients of $f$.
\begin{thm}
Let $f$ be the  power series in $A_m$ and $B_m$ satisfying \eqref{e-raney1}, and let $k$ be a positive integer. Let $i_1, i_2, \dots$ and $j_1, j_2, \dots$ be nonnegative integers, only finitely many of which are nonzero. If $i_1+i_2+\dots =k+j_1+j_2+\cdots$ then the coefficient of $A_1^{i_1}A_2^{i_2}\cdots B_1^{j_1}B_2^{j_2}\cdots$ in $f^k$ is 
\begin{equation*}
k\frac{(i_1+i_2+\cdots -1)!}{i_1!\, i_2!\cdots} 
\frac{i_1^{j_1}}{j_1!}\frac{i_2^{j_2}}{j_2!}\cdots
\end{equation*}
and if  $i_1+i_2+\dots \ne k+j_1+j_2+\cdots$ then the coefficient is zero.

\end{thm}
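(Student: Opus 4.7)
The plan is to apply Theorem \ref{t-indets}, and specifically formula \eqref{e-r00}, to the equation $f = R(f)$ with $R(t) = \sum_{m=1}^\infty A_m e^{B_m t}$ and $\phi(t) = t^k$. Theorem \ref{t-indets} is stated with indeterminate coefficients $r_n$; I will apply it formally and then make the admissible substitution $r_n = \sum_m A_m B_m^n/n!$. The admissibility needs a brief check: after substitution each $r_n$ becomes a sum of monomials all of total $A$-degree exactly $1$, so any product $r_{n_1}\cdots r_{n_j}$ contributes only to monomials of total $A$-degree $j$. Hence only finitely many of the indeterminate monomials $\prod r_{n_i}^{a_i}$ can contribute to any fixed monomial $A^I B^J$, and the substitution is admissible.

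Under this substitution, and using $\phi(0)=0$ (since $k\ge 1$) and $\phi'(t)=kt^{k-1}$, formula \eqref{e-r00} gives
\begin{equation*}
f^k = \sum_{n=1}^\infty \frac{k}{n}\cf{t^{n-k}} R(t)^n.
\end{equation*}
The remaining work is to extract the coefficient of $A_1^{i_1}A_2^{i_2}\cdots B_1^{j_1}B_2^{j_2}\cdots$ from $(k/n)[t^{n-k}]R(t)^n$ and sum over $n$.

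For this I expand $R(t)^n$ by the multinomial theorem,
\begin{equation*}
R(t)^n = \sum_{\substack{(i_1,i_2,\ldots)\\ \sum_m i_m = n}} \frac{n!}{\prod_m i_m!}\prod_m A_m^{i_m}\exp(i_m B_m t),
\end{equation*}
and then expand each factor $\exp(i_m B_m t)=\sum_{j_m\ge 0} i_m^{j_m}B_m^{j_m}t^{j_m}/j_m!$. Collecting, the coefficient of $A^I B^J$ in $R(t)^n$ equals
\begin{equation*}
\frac{n!}{\prod_m i_m!}\prod_m \frac{i_m^{j_m}}{j_m!}\cdot t^{\sum_m j_m}
\end{equation*}
when $\sum_m i_m = n$, and is zero otherwise. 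Taking $[t^{n-k}]$ forces $\sum_m j_m = n-k$, i.e. $n = k+\sum_m j_m$, which combined with $\sum_m i_m = n$ gives precisely the constraint $\sum_m i_m = k+\sum_m j_m$ from the statement; if this constraint fails, all terms vanish. When it holds, $n$ is uniquely determined by $I$ and $J$, so the sum over $n$ collapses to a single term, and the prefactor becomes $(k/n)\cdot n!/\prod_m i_m! = k(n-1)!/\prod_m i_m! = k(\sum_m i_m - 1)!/\prod_m i_m!$, yielding the asserted formula.

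The only nontrivial step is the admissibility verification, which is what lets us apply the formal Lagrange inversion formula after substitution; everything else is the standard multinomial/exponential bookkeeping. I expect no real obstacle beyond being careful with the infinite-variable setting.
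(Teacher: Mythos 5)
Your proposal is correct and follows essentially the same route as the paper: apply \eqref{e-r00} to $f=R(f)$ with $R(t)=\sum_m A_m e^{B_m t}$, expand $R(t)^n$ by the multinomial theorem, expand the exponentials, and read off the coefficient, with the constraint $\sum_m i_m = k+\sum_m j_m$ emerging from matching the power of $t$. Your explicit admissibility check is a welcome addition that the paper leaves implicit.
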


\begin{proof}
Applying equation \eqref{e-r00} to \eqref{e-raney1} gives
\begin{align*}
f^k &= \sum_{n=k}^\infty \frac{k}{n}\cf{t^{n-k}} \biggl(\sum_m A_m e^{B_m t}\biggr)^n\\
    &= \sum_{n=k}^\infty \frac{k}{n}\cf{t^{n-k}} 
    \sum_{i_1+i_2+\cdots = n}\frac{n!}{i_1!\,i_2!\cdots} A_1^{i_1}A_2^{i_2}\cdots
    e^{i_1B_1 t}e^{i_2B_2 t}\cdots\\
    &= \sum_{n=k}^\infty \frac{k}{n}\cf{t^{n-k}} 
    \sum_{i_1+i_2+\cdots = n}\frac{n!}{i_1!\,i_2!\cdots} A_1^{i_1}A_2^{i_2}\cdots
   \sum_{j_1} \frac{(i_1B_1t)^{j_1}}{j_1!}
   \sum_{j_2} \frac{(i_2B_2t)^{j_2}}{j_2!}\cdots\\
   &=\sum_{n=k}^\infty
   \sum_{\substack{i_1+i_2+\cdots = n\\
   j_1+j_2+\cdots = n-k}}
   k \frac{(n-1)!}{i_1!\,i_2!\cdots}
   A_1^{i_1}A_2^{i_2}\cdots B_1^{j_1}B_2^{j_2}\cdots\frac{i_1^{j_1}}{j_1!}\frac{i_2^{j_2}}{j_2!}\cdots,
\end{align*}
and the formula follows.
\end{proof}

A combinatorial derivation of Raney's formula has also been given by D. Knuth \cite[Section 2.3.4.4]{knuth1}.

\section{Proofs}
In this section we give several proofs of the Lagrange inversion formula.
\label{s-proofs}
\subsection{Residues}
The simplest proof of Lagrange inversion is due to Jacobi \cite{Jacobi}. We define the \emph{residue} $\res f(x)$ of a  Laurent series $f(x) = \sum_{n}f_n x^n$ to be $f_{-1}$.

Jacobi proved the following change of variables formula for residues:

\begin{thm}
\label{t-res}
 Let $f$ be a  Laurent series and let $g(x)=\sum_{n=1}^\infty g_n x^n$ be a  power series with $g_1\ne0$. Then
\begin{equation*}
\res f(x) = \res f(g(x))g'(x).
\end{equation*}
\end{thm}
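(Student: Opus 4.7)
The plan is to reduce the identity to the monomial case and then treat two subcases. By the admissibility of the substitution $x\mapsto g(x)$ (here we use $g(0)=0$ and $g_1\ne 0$, which makes $g(x)^n$ a well-defined Laurent series starting at $g_1^n x^n$ for every integer $n$), the map $f\mapsto f(g(x))g'(x)$ is $C$-linear and preserves summability. The residue is also linear, so it suffices to verify $\res x^n = \res g(x)^n g'(x)$ for each integer $n$.

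For $n\ne -1$, the left side is $0$. For the right side, observe that
\[
g(x)^n g'(x) = \frac{1}{n+1}\,\frac{d}{dx}\,g(x)^{n+1},
\]
which is the derivative of a Laurent series. Since $\frac{d}{dx}x^m = m x^{m-1}$ never produces a nonzero coefficient of $x^{-1}$ (the exponent $m=0$ is killed by the factor $m$), the residue of any derivative of a Laurent series is $0$, so the right side vanishes as well.

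For $n=-1$, factor $g(x)=x h(x)$ where $h(x)=g_1+g_2 x+\cdots$ has $h(0)=g_1\ne 0$, so $h$ is invertible in $C[[x]]$. Then $g'(x)=h(x)+x h'(x)$, and
\[
\frac{g'(x)}{g(x)} = \frac{h(x)+x h'(x)}{x h(x)} = \frac{1}{x} + \frac{h'(x)}{h(x)}.
\]
The second summand is an honest power series (a quotient of power series with invertible denominator), so its coefficient of $x^{-1}$ is $0$. Hence $\res g(x)^{-1}g'(x) = 1 = \res x^{-1}$, completing the proof.

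The only subtlety is the reduction step: one must check that the substitution $f\mapsto f(g(x))$ commutes with the (possibly infinite) sum defining $f$. This is where the hypothesis $g_1\ne 0$ (equivalently, $g$ has order exactly $1$ at $0$) is essential, as it guarantees that $\{f_n g(x)^n\}$ is summable in $C((x))$ and that taking coefficients commutes with the sum. Once that is in hand, the monomial calculation above is routine, and the case $n=-1$ is the only one that actually uses the hypothesis $g_1\ne 0$ in an essential way (to invert $h$).
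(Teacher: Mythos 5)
Your proof is correct and follows essentially the same route as the paper's: reduce by linearity to $f(x)=x^n$, kill the case $n\ne -1$ by observing that $g(x)^n g'(x)$ is a derivative of a Laurent series, and compute $\res g'(x)/g(x)=1$ directly for $n=-1$. Your explicit attention to summability of the substitution and the decomposition $g'/g = 1/x + h'/h$ are minor refinements of the same argument.
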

\begin{proof}
By linearity, it is sufficient to prove the formula when $f(x) = x^k$ for some integer $k$. If $k\ne -1$ then $\res x^k = 0$ and 
\begin{equation*}
\res g(x)^k g'(x) = \res \frac{d\ }{dx} g(x)^{k+1}/(k+1) = 0,
\end{equation*}
since the residue of a derivative is 0.

If $k=-1$ then $\res x^k = 1$ and 
\begin{align*}
\res g(x)^k g'(x) &= \res g'(x)/g(x) = \res \frac{g_1+2g_2x+\cdots}{g_1x+g_2x^2+\cdots}\\
&=\res \frac1x\cdot \frac{g_1+2g_2x+\cdots}{g_1+g_2x+\cdots}=1.\qedhere
\end{align*}
\end{proof}

Jacobi's paper \cite{Jacobi} contains a multivariable generalization of Theorem \ref{t-res}; see also Gessel \cite{MR894817} and Xin \cite{MR2164920}.

Now let $f(x)$ and $g(x)$ be compositional inverses. Then for any Laurent series $\phi$,
\begin{equation*}
\cf{x^n} \phi(f) = \res \frac{\phi(f)}{x^{n+1}}
  = \res\frac{\phi(f(g))g'}{g^{n+1}} =\res\frac{\phi(x)g'}{g^{n+1}}.
\end{equation*}
This is equation \eqref{e-bg}, which we have already seen is equivalent to the other forms of Lagrange inversion.

\begin{ex} Let $f$ be a  Laurent series and let $g(x)=\sum_{n=m}^\infty g_n x^n$ be a  Laurent series with $g_m\ne0$. Show that if $f(g(x))$ is well-defined as a  Laurent series then 
\begin{equation*}
m\res f =\res f(g(x)) g'(x).
\end{equation*}
\end{ex}

\begin{ex}
\label{ex-Todd} (Hirzebruch \cite{MR0368023}; see also Kneezel \cite{60478}.) 
Use the change of variables formula (Theorem \ref{t-res}) to show that the unique power series $f(x)$ satisfying 
\begin{equation*}
\res \left(\frac{f(x)}{x}\right)^n=1
\end{equation*}
for all $n\ge1$ is $f(x)=x/(1-e^{-x})$.
\end{ex}

\subsection{Induction}\ 
In this proof and the next we consider the equation $f=xR(f)$, where $R(t)$ is a power series. If $R(t)$ has no constant term then $f=0$ and the formulas are trivial. So we may assume that $R(t)$ has a nonzero constant term, and thus $f$ exists and is unique, since it is the compositional inverse of $x/R(x)$.

We now give an inductive proof of \eqref{e-b}: for any power series $\phi(t)$,
\begin{equation}
\label{e-i1}
\cf{x^n}\phi(f)=\cf{t^n}\left(1-\frac{tR'(t)}{ R(t)}\right)\phi(t)R(t)^n.
\end{equation}
(As noted in section 
\ref{s-formula}, this implies that \eqref{e-i1} holds more generally when $\phi(t)$ is a Laurent series.)

We first take care of the case  in which $\phi(t)=1$. The case $\phi(t)=1$,
$n=0$ is trivial. If 
$\phi(t)=1$ and $n>0$, we have
\begin{align}
\left(1-\frac{tR'(t)}{ R(t)}\right)R(t)^n
  &= R(t)^n-tR'(t)R(t)^{n-1}\notag\\
  &=R(t)^n-\frac{t}{ n}\frac{d\ }{ dt}R(t)^n.\label{e-i2}
\end{align}
Now for any power series $u(t)$, $$\cf{t^n}\left(u(t)-\frac{t}{ n}u'(t)\right)=0.$$
With\eqref{e-i2}, this proves \eqref{e-i1} for $\phi(t)=1$, $n>0$.

Now we prove the formula \eqref{e-i1} by induction on $n$. It is clear that \eqref{e-i1}holds for
$n=0$. Now let us suppose that for some nonnegative integer $m$, \eqref{e-i1} holds 
for all $\phi$ when $n=m$. We now want to show that \eqref{e-i1} holds for all $\phi$
when $n=m+1$. By linearity and the case $\phi(t)=1$, it is enough to prove \eqref{e-i1}
for $n=m+1$ and $\phi(t)=t^k$, where $k\ge 1$.
In this case we have
\begin{align*}
\cf{x^{m+1}}f^k &=\cf{x^{m+1}}f^{k-1}\cdot xR(f)\\
  &=\cf{x^m}f^{k-1}R(f)\\
  &=\cf{t^m}\left(1-\frac{tR'(t)}{ R(t)}\right)t^{k-1}R(t)R(t)^m\\
  &=\cf{t^{m+1}}\left(1-\frac{tR'(t)}{ R(t)}\right)t^kR(t)^{m+1}.
\end{align*}

\subsection{Factorization}
Another proof is based on  a version of the ``factor theorem": if $f = xR(f)$ then $t-f$ divides $t-xR(t)$.  
This proof is taken from Gessel \cite{MR570213} but it is similar to Lagrange's original proof \cite{lagrange}.

We will prove \eqref{e-pa}, which gives a formula for $f^k$ where $f=f(x)$ satisfies
$f=xR(f)$. 

First we recall Taylor's theorem for power series: if $P(t)$ is a power series in $t$, and $\alpha$ is an element of the coefficient ring, then 
\begin{equation}
\label{e-taylor}
P(t) = \sumz n \frac{(t-\alpha)^n}{n!} P\psup n(\alpha),
\end{equation}
as long as this sum  is summable. 
(The case $P(t)= t^m$ of \eqref{e-taylor} is just the binomial theorem, and the general case follows by linearity.)

Now let us apply \eqref{e-taylor} with $P(t) = t-xR(t)$ and $\alpha=f$, where  $f = xR(f)$ so that $P(f)=0$.
Then we have 
\begin{align}
t-xR(t) &= 0 + (t-f)\bigl(1-xR'(f)\bigr) + (t-f)^2 S(x,t) \notag\\
  &= (t-f) Q(x,t), \label{e-factor1}
\end{align}
where $S(x,t)$ is a power series and  $Q(x,t)$ is a power series with constant term 1.

Equation \eqref{e-factor1} is an identity in the ring $C[[x,t]]$, which is naturally embedded in the  ring $C((t))[[x]]$ of  power series in $x$ with coefficients that are Laurent series in $t$. In this ring, series like $\sum_{n=0}^\infty (x/t)^n$ are allowed, even though they have infinitely many negative powers of $t$, since the coefficient of any power of $x$ is a Laurent series in $t$. 
We now do some computations in $C((t))[[x]]$.

By \eqref{e-factor1}, we have
\begin{math}
1-x R(t)/t =  (1-f/t)Q(x,t).
\end{math}
Since $xR(t)/t$ and $f/t$ are divisible by $x$ and $Q(x,t)$ is a power series in $x$ and $t$ with constant term 1, we may take logarithms to obtain
\begin{equation}
\label{e-RQ}
-\log(1- x  R(t)/t)=-\log( 1-f/t )-\log Q(x,t).
\end{equation}
Note that $\log Q(x,t)$ is  a power series in $x$ and $t$, and so has no negative powers of $t$.  
Now we equate coefficients of $x^nt^{-k}$ on both sides of \eqref{e-RQ} where $n$ and $k$ are both positive integers.    
On the left we have
\begin{math}
\cf{t^{-k}} (R(t)/t)^n/n =  \cf{t^{n-k}} R(t)^n/n
\end{math}
and on the right we have 
\begin{math}
\cf{x^n} f^k/k.
\end{math}
Thus, 
\begin{equation*}
\cf{x^n} f^k = \frac kn\cf{t^{n-k}} R(t)^n,
\end{equation*}
which is \eqref{e-pa}.

\begin{ex} (Gessel \cite{MR570213}.) Derive \eqref{e-pd} similarly.\end{ex}

\subsection{Combinatorial proofs}
There are several different combinatorial proofs of Lagrange inversion. They all interpret the solution $f$ of  $f=xR(f)$  or $f=R(f)$  as counting certain trees. Here $f$ may be interpreted as either an ordinary or exponential generating function and thus different types of trees may be involved. 

In ordinary generating function proofs, $f$ will count unlabeled \emph{ordered trees} (also called \emph{plane trees}), which are rooted trees in which the children of each vertex are linearly ordered. (See Figure~\ref{f-otree}.) 
\begin{figure}[htbp] 
   \centering
   \includegraphics[width=1.2in]{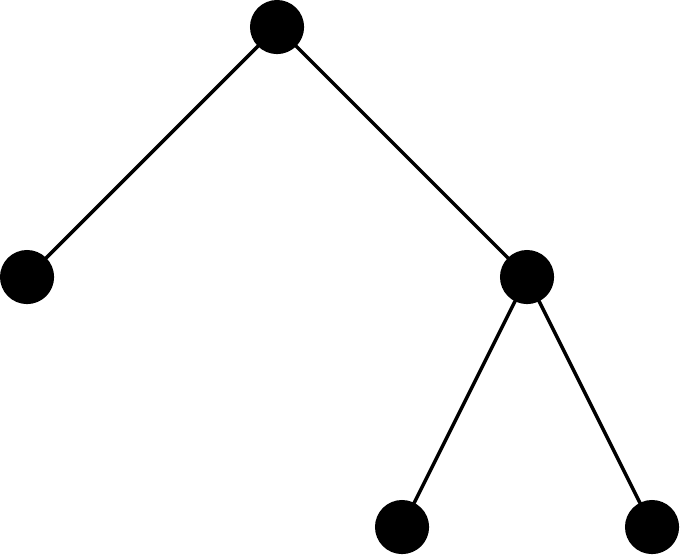} 
   \caption{An ordered tree}
   \label{f-otree}
\end{figure}
More generally, $f^k$ will count $k$-tuples of ordered trees,
which we also  call \emph{forests of ordered trees}.

If $f=xR(f)$, where $R(t) = \sum_{i=0}^\infty r_i t^i$, then the coefficient of $x^n$ in $f$ is the sum of the weights of the ordered trees with a total of $n$ vertices, where 
the weight of a tree is the product of the weights of its vertices and the weight of a vertex with $i$ children is  $r_i$.
(For example, the tree of Figure \ref{f-otree} has weight $r_0^3r_2^2$.) So to give a combinatorial proof of formula 
\eqref{e-pa},
we show that the sum of the weights of all  $k$-tuples of ordered trees with a total of $n$ vertices is $(k/n)\cf{t^{n-k}}R(t)^n$, or equivalently by \eqref{e-lagr2}, the number of $k$-tuples of ordered trees in which  $n_i$ vertices have $i$ children for each $i$ is equal to \begin{equation}
\label{e-forest1}
\frac{k}{n}\binom{n}{n_0,n_1,n_2,\dots}
\end{equation}
if 
\begin{equation}
\label{e-condition}
n=\sum_k n_i \quad \text{and}\quad  n-k = \sum_k in_i,
\end{equation}
and  is zero otherwise.
W. T. Tutte \cite{tutte} gave the case $k=1$ of \eqref{e-forest1}, which he derived (in a roundabout way) from Lagrange inversion.

We can also work with exponential generating functions. One way to do this is to consider the equation 
\begin{math}
f=x\sum_{i=0}^\infty s_i f^i\!/i!
\end{math}
where $x$ is the exponential variable and the $s_n$ are weights.
Then by the properties of exponential generating functions (see, e.g., \cite[Chapter 5]{MR1676282}) $f$ counts labeled rooted trees where a vertex with $i$ children is weighted $s_i$. More precisely, the coefficient of $x^n\!/n!$ in $f^k/k!$ is the sum of the weights of all forests of $k$ rooted trees with vertex set $[n]=\{1,2,\dots,n\}$.  Then Lagrange inversion in the form \eqref{e-lagr2}  is equivalent to assertion that the number of forests of $k$ rooted trees with vertex set $[n]$ in which  $n_i$ vertices have $i$ children 
is equal to 
\begin{equation}
\label{e-ltree}
\frac{(n-1)!}{(k-1)!\, 0!^{n_0}1!^{n_1}2!^{n_2}\cdots}\binom{n}{n_0,n_1,\dots}
\end{equation}
with the same conditions on $n_0,n_1,\dots $ as before. (See Stanley \cite[p.~30, Corollary 5.3.5]{MR1676282}.)

Another exponential generating function approach is to consider the equation \[f=\sum_{i=0}^\infty s_i \dpow fi\] where we work with exponential generating functions in the variables $s_0,s_1,\dots$. Then by the properties of multivariable exponential generating functions, the coefficient of 
\begin{equation*}
\dpow{s_0}{n_0}\dpow{s_1}{n_1}\dots
\end{equation*}
in $f^k/k!$ is the number of forests of $k$ rooted trees in  which for each $i$, $n_i$ vertices have $i$ children and these vertices are labeled $1,2,\dots, n_i$. (Since vertices with the same label have different numbers of children, there are no nontrivial label-preserving automorphisms of these forests.) For example, such a forest with $k=2$, $n_0=5$, $n_2=3$, and $n_i=0$ for $i\notin\{0,2\}$ is
show in Figure \ref{f-forest2}.
\begin{figure}[htbp] 
   \centering
   \includegraphics[width=2in]{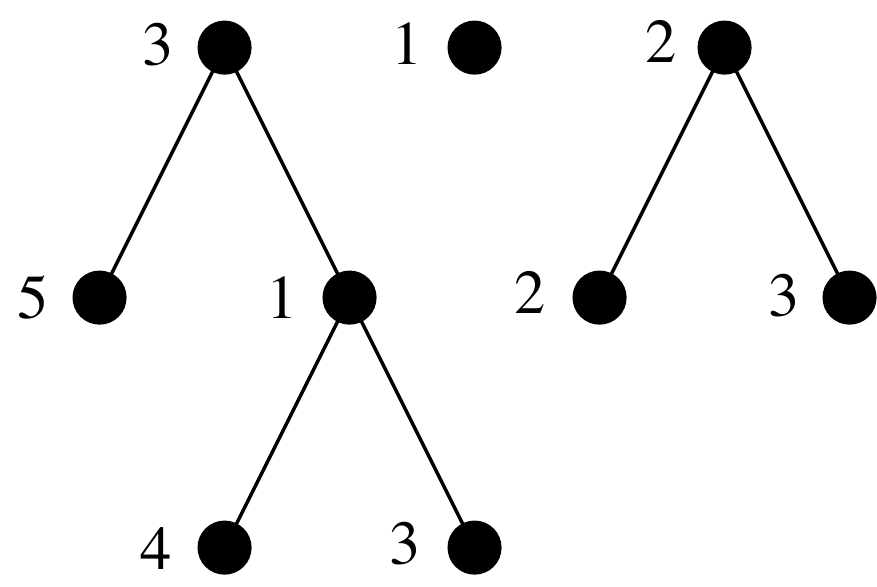} 
   \caption{A forest with $n_0=5$, $n_2=3$}
   \label{f-forest2}
\end{figure}
Then Lagrange inversion in the form \eqref{e-lagr2} (with $x=1$) is equivalent to the assertion that the number of such forests is 
\begin{equation}
\label{e-rexp}
\frac{(n-1)!}{(k-1)!\,0!^{n_0}1!^{n_1}2!^{n_2}}\cdots,
\end{equation}
where $n=n_0+n_1+\cdots$ and $n_1+2n_2+3n_3+\cdots = n-k$.

\subsection{Raney's proof}
The earliest combinatorial proof of Lagrange inversion is that of Raney~\cite{MR0114765}. We sketch here a proof that is based on Raney's though the details are different. (See also Stanley \cite[pp.~31--35 and 39--40]{MR1676282}.) We define the \emph{suffix code} $c(T)$ for an ordered tree $T$ to be a sequence of nonnegative integers defined recursively: If the root $r$ of $T$ has $j$ children, and the trees rooted at the children of $r$ are $T_1,\dots, T_j$, then $c(T)$ is the concatenation $c(T_1) \cdots c(T_j) j$. More generally, the suffix code for a $k$-tuple $(T_1,\dots, T_k)$ of ordered trees is the concatenation $c(T_1)\cdots c(T_k)$. We defined the \emph{reduced code} of a tree or $k$-tuple of trees to be the sequence obtained from the suffix code by subtracting 1 from each entry.

For example the suffix code of the $k$-tuple of trees in Figure  \ref{f-ordforest}
is $0\,0\,2\,0\,1$ and the reduced code is $\1 \, \1\, 1 \, \1\, 0$, where $\1$ denotes $-1$.
\begin{figure}[htbp] 
   \centering
   \includegraphics[width=1in]{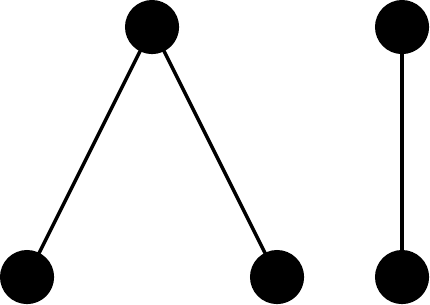} 
   \caption{A forest of ordered trees}
   \label{f-ordforest}
\end{figure}

The following lemma can be proved by induction:

\begin{lem}
\label{l-code}
\ 
\begin{enumerate}
\item[(i)] A forest of ordered trees is uniquely determined by its reduced code.
\item[(ii)] A sequence $a_1 a_2\cdots a_n$ of integers greater than or equal to $-1$ is the reduced code of an ordered $k$-forest if and only if $a_1+\cdots + a_n=-k$ and $a_1+\cdots+a_i$ is negative for $i=1,\dots, n$.
\qed
\end{enumerate}
\end{lem}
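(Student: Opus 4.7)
The plan is to establish (i) and (ii) simultaneously by induction on $n$, the number of vertices (equivalently, the length of the reduced code). The base case $n = 0$ is trivial: the only possibility is $k = 0$ with the empty forest, and the empty sequence vacuously satisfies the stated conditions.

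For the inductive step, I will unwind the suffix code one vertex at a time using its recursive definition. Consider a $k$-forest $F = (T_1, \ldots, T_k)$ with $n \geq 1$ vertices. The last entry of $c(F)$ records the number $j$ of children of the root of $T_k$; if those children carry subtrees $U_1, \ldots, U_j$, then $c(F) = c(F') \cdot j$, where $F'$ is the $(k-1+j)$-forest $(T_1, \ldots, T_{k-1}, U_1, \ldots, U_j)$, which has $n - 1$ vertices. Hence the reduced code of $F$ is the reduced code of $F'$ followed by $a_n = j - 1$. For (i), given the reduced code of $F$ I read off $j = a_n + 1$, recover $F'$ from $a_1 \cdots a_{n-1}$ by induction, and reassemble $F$ by grafting the last $j$ trees of $F'$ onto a new root that becomes the root of $T_k$.

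For (ii), I need to check that when $a_1 \cdots a_n$ satisfies the stated conditions for a $k$-forest, its prefix $a_1 \cdots a_{n-1}$ satisfies the corresponding conditions for a $(k-1+j)$-forest, where $j = a_n + 1 \geq 0$. The computation $\sum_{i < n} a_i = -k - a_n = -(k-1+j)$ gives the sum condition, and the negativity of partial sums on the prefix is inherited directly. The only point to verify is $k-1+j \geq 0$: if $n = 1$ both sides vanish, while if $n > 1$ the partial sum $\sum_{i<n} a_i$ is a negative integer, forcing $k-1+j \geq 1$. The induction then produces $F'$, and grafting yields $F$ with reduced code $a_1 \cdots a_n$, completing both parts.

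I don't expect a serious obstacle: the suffix code is defined recursively in exactly the shape this induction needs, so the whole argument reduces to the bookkeeping above. The one thing to be careful about is tracking how $k$ changes when the root of $T_k$ is peeled off (namely, $k$ is replaced by $k-1+j$), but the sum formula handles this automatically and guarantees that the reduced hypothesis is met.
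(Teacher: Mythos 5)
Your proof is correct and follows exactly the route the paper indicates (the paper merely remarks that the lemma ``can be proved by induction''): induction on $n$, peeling off the last code entry, which deletes the root of $T_k$ and replaces the $k$-forest by a $(k-1+j)$-forest on $n-1$ vertices. The only detail you leave implicit is the forward direction of (ii) --- that the reduced code of a $k$-forest actually satisfies the stated conditions --- but this follows from the same recursion, the final partial sum being $-k<0$ since $k\ge 1$.
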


We also need a lemma, due to Raney, that generalizes  the ``cycle lemma" of Dvoretzky and Motzkin \cite{MR0021531}. It can be proved by induction, or in other ways. (See, e.g., Stanley \cite[pp.~32--33]{MR1676282}.)
\begin{lem}
\label{l-cycle}
Let $a_1\cdots a_n$ be a sequence of integers greater than or equal to $-1$ with sum $-k<0$. Then there are exactly $k$ integers $i$, with $1\le i\le n$, such that the sequence
$a_i\cdots a_n a_1\cdots a_{i-1}$ has all partial sums negative. \qed
\end{lem}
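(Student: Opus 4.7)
The plan is to extend the sequence $a_1,\dots,a_n$ periodically to an infinite sequence (with $a_{\ell+n}=a_\ell$) and work with its partial sums $T_\ell=a_1+\cdots+a_\ell$, which satisfy $T_{\ell+n}=T_\ell-k$. First I would rewrite the ``good $i$'' condition: the cyclic rotation starting at $i$ has all partial sums negative iff $T_\ell<T_{i-1}$ for $\ell=i,i+1,\dots,i+n-1$, and with the periodic decrease this is further equivalent to $T_{i-1}$ being a strict right maximum of the infinite sequence, i.e., $T_{i-1}>T_\ell$ for every $\ell>i-1$. So the task reduces to counting the strict right maxima among $\{0,1,\dots,n-1\}$.

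The main structural step is the claim that consecutive strict right maxima $m<m'$ satisfy $T_{m'}=T_m-1$. To prove this, I would use the hypothesis $a_i\ge -1$, which forces $T_\ell$ to decrease by at most $1$ from one step to the next. Because $T_\ell\to-\infty$, the value $T_m-1$ must be attained at some index greater than $m$; define $m'$ to be the \emph{largest} such index. For any $\ell>m'$, $T_\ell<T_m$ (as $m$ is a record) and $T_\ell\ne T_m-1$ (by maximality of $m'$), forcing $T_\ell<T_{m'}$, so $m'$ is itself a strict right maximum. Moreover, no record $r$ can lie strictly between $m$ and $m'$, since such an $r$ would satisfy both $T_r<T_m$ and $T_r>T_{m'}=T_m-1$, impossible for integer-valued $T$.

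Given the lemma, the count is immediate. Let $m_0$ be the largest index in $\{0,1,\dots,n-1\}$ achieving $\max_{\ell\ge 0}T_\ell$ (this max is attained because $T_\ell\to-\infty$); then $m_0$ is a strict right maximum. List the successive records $m_0<m_1<m_2<\cdots$ with $T$-values $v,v-1,v-2,\dots$. By periodicity $m_0+n$ is also a record, with $T$-value $v-k$, so $m_0+n=m_k$. This places exactly $k$ records in the half-open interval $[m_0,m_0+n)$, and since the record set is invariant under the shift $\ell\mapsto\ell+n$, exactly $k$ records lie in any window of length $n$, in particular in $\{0,1,\dots,n-1\}$. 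Translating back yields exactly $k$ good starting indices $i\in\{1,\dots,n\}$.

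The main obstacle is the structural lemma that consecutive records differ in $T$-value by exactly $1$: the argument hinges on using the constraint $a_i\ge-1$ and carefully selecting the \emph{last} (rather than first) index after $m$ at which $T$ attains $T_m-1$, after which the remaining steps reduce to routine bookkeeping.
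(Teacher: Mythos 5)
Your proof is correct and complete. Note that the paper does not actually supply a proof of Lemma \ref{l-cycle}: it states the lemma with a \qed, remarking only that it ``can be proved by induction, or in other ways'' and citing Stanley. Your argument --- extend the sequence periodically, translate ``all partial sums of the rotation at $i$ are negative'' into ``$T_{i-1}$ is a strict right maximum,'' and show that consecutive such maxima have $T$-values differing by exactly $1$ (the only place the hypothesis $a_i\ge -1$ enters) --- is the classical proof, essentially the one on the cited pages of Stanley. The one step you gloss, that the value $T_m-1$ is attained after a record $m$, is in fact immediate: since $m$ is a strict right maximum, $T_{m+1}\le T_m-1$, while $a_{m+1}\ge -1$ gives $T_{m+1}\ge T_m-1$, so $T_{m+1}=T_m-1$; the rest of your bookkeeping (shift-invariance of the record set and the identification $m_k=m_0+n$) is sound.
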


We can now prove Lagrange inversion. We want to prove that the sum of the weights of all $k$-forests with $n$ vertices is $k/n$ times 
\begin{equation*}
\cf{t^{n-k}} R(t)^{n}=\cf{t^{-k}}\left(\frac{R(t)}{t}\right)^n,\end{equation*}
where $R(t) = \sum_{n=0}^\infty s_n t^n$.
Let us define the weight of a sequence $a_1\cdots a_n$ of integers to be the product $s_{a_1+1}\cdots s_{a_n+1}$. Then by Lemma \ref{l-code}, the sum of the weights of all $k$-forests with $n$ vertices is the sum of the weights of all sequences of integers of length $n$, with entries greater than or equal to $-1$, with sum $-k$, and with all partial sums negative. It is clear that $\cf{t^{-k}}(R(t)/t)^n$ is the sum of the weights 
of all sequences of integers of length $n$, with entries greater than or equal to $-1$, and with sum $-k$. But by Lemma \ref{l-cycle}, a proportion $k/n$ of these sequences have all partial sums negative.

\subsection{Proofs by labeled trees}
We can derive Lagrange inversion by counting labeled trees with the following result, which seems to have first been proved by Moon \cite{MR0175115}.
\begin{thm}
\label{t-treemult}
Let $m$ be a positive integer and let $d_1,d_2,\dots, d_m$ be positive integers. Then the number of (unrooted) trees with vertex set $[m]$ in which vertex $i$ has degree $d_i$ is the multinomial coefficient
\begin{equation}
\label{e-multico}
\binom{m-2}{d_1-1,\dots, d_m-1}
\end{equation}
if $\sum_{i=1}^m d_i = 2(m-1)$ and is 0 otherwise. 
\end{thm}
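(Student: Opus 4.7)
The plan is to prove this classical result via the Prüfer-code bijection between (unrooted) labeled trees on $[m]$ and sequences in $[m]^{m-2}$. First I would dispose of the degree-sum condition: by the handshake lemma, any tree on $[m]$ has $\sum_i \deg(i) = 2(m-1)$, so if $\sum_i d_i \neq 2(m-1)$ there is no such tree, and in this case the multinomial coefficient \eqref{e-multico} also vanishes. So assume henceforth that $\sum_i d_i = 2(m-1)$.

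The core of the proof is to set up the Prüfer map $\pi$: given a tree $T$ on $[m]$, iteratively remove the leaf of smallest label and record its unique neighbor in the current tree, halting when only two vertices remain. This produces a sequence $\pi(T)=(p_1,\dots,p_{m-2})\in [m]^{m-2}$. I would then verify that $\pi$ is a bijection by exhibiting an explicit inverse: given $(p_1,\dots,p_{m-2})$, at step $k$ let $\ell_k$ be the smallest element of $[m]$ that neither occurs in $\{p_k,p_{k+1},\dots,p_{m-2}\}$ nor has already been chosen as some $\ell_j$ with $j<k$, adjoin the edge $\{\ell_k,p_k\}$, and finish by connecting the two leftover labels. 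A straightforward induction on $m$ shows that the two procedures are mutually inverse and that the output of the inverse is always a tree.

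The key auxiliary observation is that in $\pi(T)$ each vertex $i$ appears exactly $d_i-1$ times. To see this, track the current degree $\delta(i)$ of each vertex during the construction: initially $\delta(i)=d_i$, and each time $i$ is recorded, one of its incident edges is deleted and $\delta(i)$ decreases by $1$. If $i$ is itself removed at some step it must then be a leaf, so $\delta(i)=1$ at that moment, meaning $d_i-1$ decrements happened at $i$; if $i$ is one of the two surviving vertices, then $\delta(i)$ runs from $d_i$ down to $1$, again $d_i-1$ decrements. In both cases $i$ appears $d_i-1$ times in $\pi(T)$. Combining the bijection with this degree-counting lemma, trees with degree sequence $(d_1,\dots,d_m)$ correspond to sequences in $[m]^{m-2}$ in which each $i$ appears exactly $d_i-1$ times, and the number of such sequences is precisely \eqref{e-multico}.

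The main obstacle is really just the bookkeeping needed to verify that the Prüfer map and its inverse are well-defined and mutually inverse; the degree count and the multinomial count, once the bijection is established, are immediate. Everything else (the handshake-lemma reduction and the final multinomial count) is routine.
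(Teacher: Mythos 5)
Your proof is correct and follows essentially the same route as the paper's first proof of this theorem, namely the Pr\"ufer correspondence together with the observation that a vertex of degree $d$ appears exactly $d-1$ times in the code; you simply supply more of the bookkeeping (the explicit inverse map and the degree-tracking argument) that the paper leaves as a sketch. (The paper also offers two alternative inductive proofs, due to Moon and R\'enyi, but your chosen approach matches its primary one.)
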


We will prove Theorem \ref{t-treemult} a little later; we first  look at some of its consequences.
A corollary of Theorem \ref{t-treemult} allows us to count forests of rooted trees:
\begin{cor}
\label{c-forests}
Let $e_1,e_2,\dots, e_n$ be nonnegative integers with $e_1+e_2+\cdots+e_n = n-k$, and let $k$ be a positive integer. Then the number of forests of $k$ rooted trees, with vertex set $[n]$, in which vertex $i$ has $e_i$ children is 
\begin{equation*}
\binom{n-1}{k-1, e_1,\dots e_n}.
\end{equation*}
\end{cor}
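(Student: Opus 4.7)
The plan is to reduce the Corollary to Theorem \ref{t-treemult} by the standard trick of adjoining an auxiliary vertex that joins all the roots together into a single tree.

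First I would describe the bijection. Given a rooted forest $F$ on vertex set $[n]$ with exactly $k$ trees (so $k$ roots), adjoin a new vertex, labeled $n+1$, and draw an edge from $n+1$ to each of the $k$ roots of $F$. The result is an unrooted tree $T$ on vertex set $[n+1]$ in which the vertex $n+1$ has degree $k$. Conversely, given such a tree $T$, deleting vertex $n+1$ leaves $k$ connected components, each of which becomes a rooted tree by choosing as its root the unique vertex that was adjacent to $n+1$ in $T$. These two operations are inverse to each other, so they set up a bijection between $k$-rooted forests on $[n]$ and trees on $[n+1]$ in which vertex $n+1$ has degree exactly $k$.

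Next I would translate the degree condition. In the forest $F$, a non-root vertex $i$ has one parent and $e_i$ children, hence degree $e_i+1$ in $F$; a root has no parent and $e_i$ children, hence degree $e_i$ in $F$, but after adjoining $n+1$ and the new edge to the root, its degree in $T$ becomes $e_i+1$ as well. Thus, under the bijection, the condition ``vertex $i$ has $e_i$ children in $F$'' for $i\in[n]$ corresponds to ``vertex $i$ has degree $d_i:=e_i+1$ in $T$,'' while vertex $n+1$ has degree $d_{n+1}:=k$. The required degree sum is automatically satisfied:
\begin{equation*}
\sum_{i=1}^{n}(e_i+1)+k=(n-k)+n+k=2n=2\bigl((n+1)-1\bigr),
\end{equation*}
so Theorem \ref{t-treemult} applies.

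Finally I would apply Theorem \ref{t-treemult} with $m=n+1$ and the degree sequence $(e_1+1,\dots,e_n+1,k)$. The number of such trees equals
\begin{equation*}
\binom{m-2}{d_1-1,\dots,d_m-1}=\binom{n-1}{e_1,\dots,e_n,k-1}=\binom{n-1}{k-1,e_1,\dots,e_n},
\end{equation*}
which is exactly the count claimed by the Corollary. There is no real obstacle here: the only thing to get right is the bookkeeping of the degree shift, so that the single ``missing'' degree in the multinomial coefficient $\binom{m-2}{\cdots}$ corresponds to the ``$k-1$'' slot and not to one of the $e_i$'s.
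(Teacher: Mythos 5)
Your proof is correct and is essentially identical to the paper's own argument: the paper also adjoins a vertex $n+1$ joined to the $k$ roots, observes that each vertex $i\in[n]$ then has degree $e_i+1$, and applies Theorem \ref{t-treemult} to the resulting degree sequence. Your verification of the degree-sum condition and the bookkeeping of the multinomial coefficient are both accurate.
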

\begin{proof}
Let $T$ be a tree on $[n+1]$ in which vertex $n+1$ has degree $k$,  and vertex $i$ has  degree $e_i+1$   for $1\le i\le n$ (so that $\sum_{i=1}^n e_i =n-k$). Removing vertex $n+1$ from $T$ and rooting the resulting component trees at the neighbors of  $n+1$ in $T$ gives a forest $F$ of $k$ rooted trees in which vertex $i$ has $e_i$ children, and this operation gives a bijection from trees on $[n+1]$ in which vertex $n+1$ has degree $k$ and vertex $i$ has degree $e_i+1$  for $1\le i \le n$ to the set of rooted forests of $k$ trees on $[n]$ in which vertex $i$ has $e_i$ children. The result then follows from Lemma \ref{t-treemult}.
\end{proof}

It follows from Corollary  \ref{c-forests} that the number of
forests of $k$ rooted trees in  which for each $i$, $n_i$ vertices have $i$ children and these vertices are labeled $1,2,\dots, n_i$ is given by \eqref{e-rexp}, which as we saw earlier, is equivalent to Lagrange inversion. 

Now to count $k$-forests on $[n]$ in which $n_i$ vertices have $i$ children, we first assign the number of children to each element of $[n]$, which can be done in $\binom{n}{n_0,n_1,\dots}$ ways. For each assignment, the number of trees is 
\begin{equation*}
\frac{(n-1)!}{(k-1)!\,0!^{n_0}1!^{n_1}\cdots}
\end{equation*}
Multiplying these factors gives \eqref{e-ltree}.

We now present sketches of three proofs of Theorem \ref{t-treemult}. They all depend on the fact that a tree with at least two vertices must have at least one leaf (vertex of degree 1).

The first proof uses Pr\"ufer's correspondence \cite{prufer}. Given a tree $T_1$ with vertex set $[n]$, let $a_1$ be the least leaf of $T_1$ and let $b_1$ be the unique neighbor of $a_1$ in $T_1$. Let $T_2$ be the result of removing  $a_1$ and its incident edge from $T_1$. Let $a_2$ be the least leaf of $T_2$ and let $b_2$ be its neighbor in $T_2$. Continue in this way to define $b_3,\cdots, b_{n-2}$. Then the \emph{Pr\"ufer code} of $T_1$ is the sequence $(b_1, b_2, \cdots, b_{n-2})$. It can be shown that the map that takes a tree to its Pr\"ufer code is a bijection from the set of trees with vertex set $[n]$, for $n\ge 2$, to the set of sequences $b_1b_2\dots b_{n-2}$ of elements of $[n]$, with the property that a vertex of degree $d$ appears $d-1$ times in the Pr\"ufer code. Then, as noted by Moon 
\cite{MR0175115}, 
Theorem \ref{t-treemult} is an immediate consequence, since the multinomial coefficient counts Pr\"ufer codes of trees in which vertex $i$ has degree $d_i$.

The second proof is by induction, and is due to Moon \cite{MR0231755} (see also  \cite[p.~13]{MR0274333}). For $m\ge2$, let $T(m;d_1,\dots, d_m)$ be the number of trees on $[m]$ in which vertex $i$ has degree $d_i$ and let $U(m;d_1\dots, d_m)$ be the multinomial coefficient \eqref{e-multico}, which is 0 if any $d_i$ is less than 1 or if $\sum_i d_i\ne 2(n-1)$.

Clearly $T(m;d_1,\dots, d_m)$ is equal to $U(m;d_1\dots, d_m)$ for $m=2$. Now suppose that $n>2$ and that $T(m;d_1,\dots, d_m)$ is equal to $U(m;d_1\dots, d_m)$ for $m=n-1$ and all choices of $d_1,\dots, d_m$. 

We next observe that if $d_n=1$ then 
\begin{equation}
\label{e-multirec}
T(n; d_1,\dots, d_n) = \sum_{i=1}^{n-1} T(n-1; d_1,\dots, d_i-1,\dots, d_{n-1})
\end{equation}
since every tree in which $n$ is a leaf is obtained by joining vertex $n$ with an edge to some vertex of a tree on $[n-1]$,  increasing by 1 the degree of this edge and leaving the other degrees unchanged. Then by the inductive hypothesis and a well-known recurrence for multinomial coefficients, $T(n;d_1,\dots, d_n)$ is equal to $U(n;d_1\dots, d_n)$. We have assumed that $d_n=1$, but since $T(n;d_1,\dots,d_n)$ is symmetric in $d_1,\dots, d_n$ and every tree has at least one leaf, the result holds without this assumption.

R\'enyi \cite{MR0263708} (see also Moon \cite[p.~13]{MR0274333}) gave an elegant variation of this proof. He showed by induction that for $m\ge2$ the number of trees on $[m]$  with degree sequence $(d_1, \dots, d_m)$, i.e.,  trees in which vertex $i$ has degree $d_i$,  is the coefficient of $x_1^{d_1-1}\cdots x_m^{d_m-1}$ in $(x_1+\cdots+x_m)^{m-2}$. The result clearly holds for $m=2$, so suppose that $n>2$ and that the result holds when $m=n-1$. To show that it holds for $m=n$, we note that every tree on $[n]$ has at least one leaf, so by symmetry, we may assume, without loss of generality, that $d_n=1$. Thus we need only show that the number of trees on $[n]$ with degree sequence $(d_1,\dots, d_{n-1}, 1)$ is the coefficient of $x_1^{d_1-1}\cdots x_{n-1}^{d_{n-1}-1}$ in 
\begin{equation*}
(x_1+\cdots+x_{n-1})^{n-2} = (x_1+\cdots +x_{n-1})\cdot(x_1+\cdots+x_{n-1})^{n-3}.
\end{equation*}
But every tree on $[n]$ in which vertex $n$ is a leaf is obtained by joining vertex $n$ with an edge to some vertex of a tree on $[n-1]$,  increasing by 1 the degree of this vertex and leaving the other degrees unchanged. Thus by the induction hypothesis, the contribution from trees in which vertex $n$ is joined to vertex $i$ is $x_i(x_1+\cdots+x_{n-1})^{n-3}$ and the result follows.

\medskip
\small{
\textbf{Acknowledgment.} I would like to thank Sateesh Mane and an anonymous referee for helpful comments.}

\bibliography{Lagrange-refs}{}

\providecommand{\bysame}{\leavevmode\hbox to3em{\hrulefill}\thinspace}
\begin{thebibliography}{10}

\bibitem{MR1577607}
N.~H. Abel, \emph{Beweis eines {A}usdruckes, von welchem die
  {B}inomial-{F}ormel ein einzelner {F}all ist}, J. Reine Angew. Math.
  \textbf{1} (1826), 159--160.

\bibitem{MR3139570}
Ulrich Abel, \emph{A generalization of the {L}eibniz rule}, Amer. Math. Monthly
  \textbf{120} (2013), no.~10, 924--928.

\bibitem{MR2561274}
Drew Armstrong, \emph{Generalized noncrossing partitions and combinatorics of
  {C}oxeter groups}, Mem. Amer. Math. Soc. \textbf{202} (2009), no.~949, x+159.

\bibitem{MR2438172}
Jean-Christophe Aval, \emph{Multivariate {F}uss-{C}atalan numbers}, Discrete
  Math. \textbf{308} (2008), no.~20, 4660--4669.

\bibitem{MR1629341}
F.~Bergeron, G.~Labelle, and P.~Leroux, \emph{Combinatorial {S}pecies and
  {T}ree-like {S}tructures}, Encyclopedia of Mathematics and its Applications,
  vol.~67, Cambridge University Press, Cambridge, 1998, Translated from the
  1994 French original by Margaret Readdy. With a foreword by Gian-Carlo Rota.

\bibitem{MR2200854}
Christian Brouder, Alessandra Frabetti, and Christian Krattenthaler,
  \emph{Non-commutative {H}opf algebra of formal diffeomorphisms}, Adv. Math.
  \textbf{200} (2006), no.~2, 479--524.

\bibitem{MR3081645}
Jean-Paul Bultel, \emph{Combinatorial properties of the noncommutative {F}a\`a
  di {B}runo algebra}, J. Algebraic Combin. \textbf{38} (2013), no.~2,
  243--273.

\bibitem{MR0222357}
L.~Carlitz, \emph{The generating function for the {J}acobi polynomial}, Rend.
  Sem. Mat. Univ. Padova \textbf{38} (1967), 86--88.

\bibitem{MR570168}
\bysame, \emph{Weighted {S}tirling numbers of the first and second kind. {I}},
  Fibonacci Quart. \textbf{18} (1980), no.~2, 147--162.

\bibitem{cauchy1826}
Augustin-Louis Cauchy, \emph{Exercises de math\'ematiques, vol. 1}, De Bure
  fr{\`e}res, 1826, Application du calcul des r\'esidus \`a la sommation de
  plusieurs suites. Oeuvres compl\`etes d'Augustin Cauchy. S\'erie 2, tome 6,
  pp.~62--73.

\bibitem{MR3066349}
William Y.~C. Chen, Janet F.~F. Peng, and Harold R.~L. Yang,
  \emph{Decomposition of triply rooted trees}, Electron. J. Combin. \textbf{20}
  (2013), no.~2, Paper 10, 10 pp.

\bibitem{MR2604484}
Wenchang Chu, \emph{Leibniz inverse series relations and {P}faff-{C}auchy
  derivative identities}, Rend. Mat. Appl. (7) \textbf{29} (2009), no.~2,
  209--221.

\bibitem{MR2644861}
\bysame, \emph{Elementary proofs for convolution identities of {A}bel and
  {H}agen-{R}othe}, Electron. J. Combin. \textbf{17} (2010), no.~1, Note 24, 5
  pp.

\bibitem{MR3060871}
\bysame, \emph{Derivative inverse series relations and {L}agrange expansion
  formula}, Int. J. Number Theory \textbf{9} (2013), no.~4, 1001--1013.

\bibitem{MR919877}
J.~Cigler, \emph{Some remarks on {C}atalan families}, European J. Combin.
  \textbf{8} (1987), no.~3, 261--267.

\bibitem{MR1414285}
R.~M. Corless, G.~H. Gonnet, D.~E.~G. Hare, D.~J. Jeffrey, and D.~E. Knuth,
  \emph{On the {L}ambert {$W$} function}, Adv. Comput. Math. \textbf{5} (1996),
  no.~4, 329--359.

\bibitem{MR1809988}
Robert~M. Corless, David~J. Jeffrey, and Donald~E. Knuth, \emph{A sequence of
  series for the {L}ambert {$W$} function}, Proceedings of the 1997
  {I}nternational {S}ymposium on {S}ymbolic and {A}lgebraic {C}omputation
  ({K}ihei, {HI}), ACM, New York, 1997, pp.~197--204.

\bibitem{MR0021531}
A.~Dvoretzky and Th. Motzkin, \emph{A problem of arrangements}, Duke Math. J.
  \textbf{14} (1947), 305--313.

\bibitem{Edelman}
Paul~H. Edelman, \emph{Chain enumeration and noncrossing partitions}, Discrete
  Math. \textbf{31} (1980), no.~2, 171--180.

\bibitem{Eu-Fu}
Sen-Peng Eu and Tung-Shan Fu, \emph{Lattice paths and generalized cluster
  complexes}, J. Combin. Theory Ser. A \textbf{115} (2008), no.~7, 1183--1210.

\bibitem{fuss}
Nicolao Fuss, \emph{Solutio quaestionis, quot modis polygonum $n$ laterum in
  polygona $m$ laterum, per diagonales resolvi queat}, Nova Acta Academiae Sci.
  Petropolitanae \textbf{9} (1791), 243--251.

\bibitem{MR638947}
Adriano~M. Garsia, \emph{A {$q$}-analogue of the {L}agrange inversion formula},
  Houston J. Math. \textbf{7} (1981), no.~2, 205--237.

\bibitem{MR552269}
Ira Gessel, \emph{A noncommutative generalization and {$q$}-analog of the
  {L}agrange inversion formula}, Trans. Amer. Math. Soc. \textbf{257} (1980),
  no.~2, 455--482.

\bibitem{MR0462961}
Ira Gessel and Richard~P. Stanley, \emph{Stirling polynomials}, J.
  Combinatorial Theory Ser. A \textbf{24} (1978), no.~1, 24--33.

\bibitem{MR690047}
Ira Gessel and Dennis Stanton, \emph{Applications of {$q$}-{L}agrange inversion
  to basic hypergeometric series}, Trans. Amer. Math. Soc. \textbf{277} (1983),
  no.~1, 173--201.

\bibitem{MR773560}
\bysame, \emph{Short proofs of {S}aalsch\"utz's and {D}ixon's theorems}, J.
  Combin. Theory Ser. A \textbf{38} (1985), no.~1, 87--90.

\bibitem{MR570213}
Ira~M. Gessel, \emph{A factorization for formal {L}aurent series and lattice
  path enumeration}, J. Combin. Theory Ser. A \textbf{28} (1980), no.~3,
  321--337.

\bibitem{MR894817}
\bysame, \emph{A combinatorial proof of the multivariable {L}agrange inversion
  formula}, J. Combin. Theory Ser. A \textbf{45} (1987), no.~2, 178--195.

\bibitem{MR1354969}
Ira~M. Gessel and Gilbert Labelle, \emph{Lagrange inversion for species}, J.
  Combin. Theory Ser. A \textbf{72} (1995), no.~1, 95--117.

\bibitem{MR0087621}
H.~W. Gould, \emph{Final analysis of {V}andermonde's convolution}, Amer. Math.
  Monthly \textbf{64} (1957), 409--415.

\bibitem{MR0480057}
\bysame, \emph{Euler's formula for {$n$}th differences of powers}, Amer. Math.
  Monthly \textbf{85} (1978), no.~6, 450--467.

\bibitem{MR1397498}
Ronald~L. Graham, Donald~E. Knuth, and Oren Patashnik, \emph{Concrete
  {M}athematics: {A} {F}oundation for {C}omputer {S}cience}, second ed.,
  Addison-Wesley Publishing Company, Reading, MA, 1994.

\bibitem{hagen}
S.~J. Hagen, Johann~G., \emph{Synopsis der {H}oeheren {M}athematik:
  {A}rithmetische und {A}lgebraische {A}nalyse}, vol.~1, Felix L. Dames,
  Berlin, 1891.

\bibitem{MR0368023}
F.~Hirzebruch, \emph{The signature theorem: reminiscences and recreation},
  Prospects in mathematics ({P}roc. {S}ympos., {P}rinceton {U}niv.,
  {P}rinceton, {N}.{J}., 1970), Princeton Univ. Press, Princeton, N.J., 1971,
  pp.~3--31. Ann. of Math. Studies, No. 70.

\bibitem{hofbauer}
Josef Hofbauer, \emph{Lagrange-inversion}, S\'em. Lothar. Combin. \textbf{6}
  (1982), Art. B06a,
  \url{http://www.emis.de/journals/SLC/opapers/s06hofbauer.html}.

\bibitem{MR3365045}
Jianfeng Huang and Xinrong Ma, \emph{Two elementary applications of the
  {L}agrange expansion formula}, J. Math. Res. Appl. \textbf{35} (2015), no.~3,
  263--270.

\bibitem{MR0059359}
Eri Jabotinsky, \emph{Representation of functions by matrices. {A}pplication to
  {F}aber polynomials}, Proc. Amer. Math. Soc. \textbf{4} (1953), 546--553.

\bibitem{Jacobi}
C.~G.~J. Jacobi, \emph{De resolutione aequationum per series infinitas},
  Journal f{\"u}r die reine und angewandte Mathematik \textbf{6} (1830),
  257--286, Gesammelte Werke, vol. 6, pp. 26--61, G. Reimer, Berlin (1891),
  reprinted by Chelsea Publishing Company, New York (1969).

\bibitem{jensen}
J.~L. W.~V. Jensen, \emph{Sur une identit\'e d'{A}bel et sur d'utres formules
  analogues}, Acta Math. \textbf{26} (1902), 307--318.

\bibitem{MR2281160}
Warren~P. Johnson, \emph{The {P}faff/{C}auchy derivative identities and
  {H}urwitz type extensions}, Ramanujan J. \textbf{13} (2007), no.~1--3,
  167--201.

\bibitem{60478}
Dan Kneezel, \emph{Hirzebruch's motivation of the {T}odd class}, MathOverflow,
  \url{http://mathoverflow.net/q/60478} (version: 2011-04-03).

\bibitem{knuth1}
Donald~E. Knuth, \emph{The {A}rt of {C}omputer {P}rogramming. {V}ol. 1:
  {F}undamental {A}lgorithms}, third ed., Addison-Wesley, Reading, MA, 1997.

\bibitem{MR924765}
Ch. Krattenthaler, \emph{Operator methods and {L}agrange inversion: a unified
  approach to {L}agrange formulas}, Trans. Amer. Math. Soc. \textbf{305}
  (1988), no.~2, 431--465.

\bibitem{MR2249265}
Christian Krattenthaler, \emph{The {$F$}-triangle of the generalised cluster
  complex}, Topics in {D}iscrete {M}athematics, Algorithms Combin., vol.~26,
  Springer, Berlin, 2006, pp.~93--126.

\bibitem{MR927766}
Gilbert Labelle, \emph{Some new computational methods in the theory of
  species}, Combinatoire \'enum\'erative ({M}ontreal, {Q}ue., 1985/{Q}uebec,
  {Q}ue., 1985), Lecture Notes in Math., vol. 1234, Springer, Berlin, 1986,
  pp.~192--209.

\bibitem{lacasse}
Alexandre Lacasse, \emph{Bornes {PAC}-{B}ayes et algorithmes d'apprentissage},
  Ph.D. thesis, Universit\'e Laval, Qu\'ebec, 2010.

\bibitem{lagrange}
Joseph~Louis Lagrange, \emph{Nouvelle m\'ethode pour r\'esoudre les \'equations
  litt\'erales par le moyen des s\'eries}, M\'emoires de l'Acad\'emie Royale
  des Sciences et Belles-Lettres de Berlin \textbf{24} (1770), 251--326,
  Oeuvres compl\`ete, tome 3, Paris, 1867, 5--73.

\bibitem{MR3084583}
Romuald Lenczewski and Rafa{\l} Sa{\l}apata, \emph{Multivariate
  {F}uss-{N}arayana polynomials and their application to random matrices},
  Electron. J. Combin. \textbf{20} (2013), no.~2, Paper 41, 14 pp.

\bibitem{Liouville}
J.~Liouville, \emph{Remarques sur un m\'emoire de {N}. {F}uss}, J. Math. Pures
  Appl. \textbf{8} (1843), 391--394.

\bibitem{MR2433584}
Toufik Mansour and Yidong Sun, \emph{Bell polynomials and {$k$}-generalized
  {D}yck paths}, Discrete Appl. Math. \textbf{156} (2008), no.~12, 2279--2292.

\bibitem{MR2290868}
D.~Merlini, R.~Sprugnoli, and M.~C. Verri, \emph{Lagrange inversion: when and
  how}, Acta Appl. Math. \textbf{94} (2006), no.~3, 233--249 (2007).

\bibitem{MR0175115}
J.~W. Moon, \emph{The second moment of the complexity of a graph}, Mathematika
  \textbf{11} (1964), 95--98.

\bibitem{MR0231755}
\bysame, \emph{Enumerating labelled trees}, Graph {T}heory and {T}heoretical
  {P}hysics, Academic Press, London, 1967, pp.~261--272.

\bibitem{MR0274333}
\bysame, \emph{Counting {L}abelled {T}rees}, Canadian Mathematical Monographs,
  No. 1, Canadian Mathematical Congress, Montreal, Que., 1970.

\bibitem{MR0252386}
Ivan Niven, \emph{Formal power series}, Amer. Math. Monthly \textbf{76} (1969),
  871--889.

\bibitem{MR2380843}
Jean-Christophe Novelli and Jean-Yves Thibon, \emph{Noncommutative symmetric
  functions and {L}agrange inversion}, Adv. in Appl. Math. \textbf{40} (2008),
  no.~1, 8--35.

\bibitem{oeis}
\emph{\textrm{The {O}n-{L}ine {E}ncyclopedia of {I}nteger {S}equences}},
  published electronically at \url{http://oeis.org}, 2016.

\bibitem{pfaff}
J.~F. Pfaff, \emph{Allgemeine {S}ummation einer {R}eihe, worinn h\"ohere
  {D}ifferenziale vorkommen}, Archiv der reinen und angewandten Mathematik
  \textbf{1} (1795), 337--47.

\bibitem{MR3089711}
Helmut Prodinger, \emph{An identity conjectured by {L}acasse via the tree
  function}, Electron. J. Combin. \textbf{20} (2013), no.~3, Paper 7, 3 pp.

\bibitem{prufer}
Heinz Pr\"ufer, \emph{Neuer {B}eweis eines {S}atzes \"uber {P}ermutationen},
  Arch. Math. Phys \textbf{27} (1918), 742--744.

\bibitem{MR0114765}
George~N. Raney, \emph{Functional composition patterns and power series
  reversion}, Trans. Amer. Math. Soc. \textbf{94} (1960), 441--451.

\bibitem{MR0166114}
\bysame, \emph{A formal solution of {$\sum ^{\infty }_{i=1}
  A_{i}e^{B_{i}X}=X$}}, Canad. J. Math. \textbf{16} (1964), 755--762.

\bibitem{MR0263708}
Alfr{\'e}d R{\'e}nyi, \emph{On the enumeration of trees}, Combinatorial
  {S}tructures and their {A}pplications ({P}roc. {C}algary {I}nternat. {C}onf.,
  {C}algary, {A}lta., 1969), Gordon and Breach, New York, 1970, pp.~355--360.

\bibitem{MR0231725}
John Riordan, \emph{Combinatorial {I}dentities}, John Wiley \& Sons, Inc., New
  York-London-Sydney, 1968.

\bibitem{rothe}
Henrico~Augusto Rothe, \emph{Formulae {D}e {S}erierum {R}eversione
  {D}emonstratio {U}niversalis {S}ignis {L}ocalibus
  {C}ombinatorio-{A}nalyticorum {V}icariis {E}xhibita}, Sommer, Leipzig, 1793.

\bibitem{MR0011740}
Issai Schur, \emph{On {F}aber polynomials}, Amer. J. Math. \textbf{67} (1945),
  33--41.

\bibitem{smiley}
Leonard~M. Smiley, \emph{Completion of a rational function sequence of
  {C}arlitz}, 2000, \url{arXiv:math/0006106 [math.CO]}.

\bibitem{MR2529395}
Alan~D. Sokal, \emph{A ridiculously simple and explicit implicit function
  theorem}, S\'em. Lothar. Combin. \textbf{61A} (2009/11), Art. B61Ad, 21 pp.

\bibitem{MR1444167}
Richard~P. Stanley, \emph{Parking functions and noncrossing partitions},
  Electron. J. Combin. \textbf{4} (1997), no.~2, Research Paper 20, 4 pp., The
  Wilf Festschrift (Philadelphia, PA, 1996).

\bibitem{MR1676282}
\bysame, \emph{Enumerative {C}ombinatorics. {V}ol. 2}, Cambridge Studies in
  Advanced Mathematics, vol.~62, Cambridge University Press, Cambridge, 1999,
  With a foreword by Gian-Carlo Rota and appendix 1 by Sergey Fomin.

\bibitem{MR938977}
Dennis Stanton, \emph{Recent results for the {$q$}-{L}agrange inversion
  formula}, Ramanujan revisited ({U}rbana-{C}hampaign, {I}ll., 1987), Academic
  Press, Boston, MA, 1988, pp.~525--536.

\bibitem{MR2644857}
Yidong Sun, \emph{A simple bijection between binary trees and colored ternary
  trees}, Electron. J. Combin. \textbf{17} (2010), no.~1, Note 20, 5.

\bibitem{MR3066350}
\bysame, \emph{A simple proof of an identity of {L}acasse}, Electron. J.
  Combin. \textbf{20} (2013), no.~2, Paper 11, 3 pp.

\bibitem{tutte}
William~T. Tutte, \emph{The number of planted plane trees with a given
  partition}, Amer. Math. Monthly \textbf{71} (1964), no.~3, 272--277.

\bibitem{wang}
Chao-Jen Wang, \emph{Applications of the {G}oulden-{J}ackson {C}luster {M}ethod
  to {C}ounting {D}yck paths by {O}ccurrences of {S}ubwords}, Ph.D. thesis,
  Brandeis University, 2011.

\bibitem{ww}
E.~T. Whittaker and G.~N. Watson, \emph{A {C}ourse of {M}odern {A}nalysis},
  Third Edition, Cambridge University Press, 1920.

\bibitem{MR2164920}
Guoce Xin, \emph{A residue theorem for {M}alcev-{N}eumann series}, Adv. in
  Appl. Math. \textbf{35} (2005), no.~3, 271--293.

\bibitem{dxu}
Dapeng Xu, \emph{Generalizations of two-stack-sortable permutations}, 2002,
  \url{arXiv:math/0209313 [math.CO]}.

\bibitem{zvonkine}
Malik Younsi, \emph{An algebra of power series arising in the intersection
  theory of moduli spaces of curves and in the enumeration of ramified
  coverings of the sphere}, 2004, \url{arXiv:math/0403092 [math.AG]}.

\bibitem{younsi}
\bysame, \emph{Proof of a combinatorial conjecture coming from the
  {PAC}-{B}ayesian machine learning theory}, 2012, \url{arXiv:1209.0824
  [math.CO]}.

\end{thebibliography}
\bibliographystyle{amsplain}

\end{document}